\documentclass{amsart}
\usepackage{amssymb}
\usepackage{amsmath}
\usepackage{amsfonts}

\setcounter{MaxMatrixCols}{10}

\newtheorem{theorem}{Theorem}
\theoremstyle{plain}

\newtheorem{corollary}{Corollary}

\newtheorem{definition}{Definition}
\newtheorem{example}{Example}

\newtheorem{lemma}{Lemma}

\newtheorem{proposition}{Proposition}
\newtheorem{remark}{Remark}

\numberwithin{equation}{section}
\textwidth 13,5cm
\input{tcilatex}

\begin{document}
\title{ANTI-INVARIANT RIEMANNIAN SUBMERSIONS FROM KENMOTSU MANIFOLDS ONTO
RIEMANNIAN MANIFOLDS}
\author{A. Beri}
\email{ayseyilmazsoyberi@gmail.com}
\author{I. K\"{u}peli Erken}
\address{Art and Science Faculty,Department of Mathematics, Uludag
University, 16059 Bursa, TURKEY}
\email{iremkupeli@uludag.edu.tr}
\author{C. Murathan}
\address{Uludag University, Faculty of Art and Science, Department of
Mathematics, Gorukle 16059 Bursa, TURKEY}
\email{cengiz@uludag.edu.tr}
\date{26.08.2015}
\subjclass[2010]{Primary 53C25, 53C43, 53C55; Secondary 53D15}
\keywords{Riemannian submersion, conformal submersion,Warped product,
Kenmotsu manifold, Anti-invariant Riemannian submersion}

\begin{abstract}
The purpose of this paper is to study anti-invariant Riemannian submersions
from Kenmotsu manifolds onto Riemannian manifolds. Several fundamental
results in this respect are proved. The integrability of the distributions
and the geometry of foliations are investigated. We proved that there do not
exist (anti-invariant) Riemannian submersions from Kenmotsu manifolds onto
Riemannian manifolds such that characteristic vector field $\xi $ is a
vertical vector field. We gave a method to get horizontally conformal
submersion examples from warped product manifolds onto Riemannian manifolds.
Furthermore, we presented an example of anti-invariant Riemannian
submersions in the case where the characteristic vector field $\xi $ is a
horizontal vector field and an anti-invariant horizontally conformal
submersion such that $\xi $ is a vertical vector field.
\end{abstract}

\maketitle

\section{\textbf{Introduction}}

Riemannian submersions between Riemannian manifolds were studied by O'Neill 
\cite{BO1} and Gray \cite{GRAY}. Riemannian submersions have several
applications in mathematical physics. Indeed, Riemannian submersions have
their applications in the Yang-Mills theory (\cite{BL}, \cite{WATSON}),
Kaluza-Klein theory (\cite{BOL}, \cite{IV}), supergravity and superstring
theories (\cite{IV2}, \cite{MUS}), etc. Later such submersions were
considered between manifolds with differentiable structures, see \cite{FAL}.
Furthermore, we have the following submersions: semi-Riemannian submersion
and Lorentzian submersion \cite{FAL}, Riemannian submersion \cite{GRAY},
slant submersion (\cite{CHEN}, \cite{SAHIN1}), almost Hermitian submersion 
\cite{WAT}, contact-complex submersion \cite{IANUS}, quaternionic submersion 
\cite{IANUS2}, almost $h$-slant submersion and $h$-slant submersion \cite%
{PARK1}, semi-invariant submersion \cite{SAHIN2}, $h$-semi-invariant
submersion \cite{PARK2}, etc.

Comparing with the huge literature in Riemannian submersions, it seems that
there are necessary new studies in anti-invariant Riemannian submersions; an
interesting paper connecting these fields is \cite{SAHIN}. \c{S}ahin \cite%
{SAHIN} introduced anti-invariant Riemannian submersions from almost
Hermitian manifolds onto Riemannian manifolds. Later, he suggested to
investigate anti-invariant Riemannian submersions from almost contact metric
manifolds onto Riemannian manifolds \cite{SAHIN3}. The present work is
another step in this direction, more precisely from the point of view of
anti-invariant Riemannian submersions from Kenmotsu manifolds. Our work is
structured as follows: Section $2$ is focused on basic facts for Riemannian
submersions and Kenmotsu manifolds. The third section is concerned with
definition of anti-invariant Riemannian submersions from Kenmotsu manifolds
onto Riemannian manifolds. We investigate the integrability of the
distributions and the geometry of foliations. We proved that there do not
exist (anti-invariant) Riemannian submersions from Kenmotsu manifolds onto
Riemannian manifolds such that characteristic vector field $\xi $ is
vertical vector field. The last section is devoted to an example of
anti-invariant Riemannian submersions in the case where the characteristic
vector field $\xi $ is a horizontal vector field and an anti-invariant
horizontally conformal submersion such that $\xi $ is a vertical vector
field.

\section{Preliminaries}

In this section we recall several notions and results which will be needed
throughout the paper.

Let $M$ be an $(2m+1)$-dimensional connected differentiable manifold \cite%
{BLAIR} endowed with an almost contact metric structure $(\phi ,\xi ,\eta
,g) $ consisting of a $(1,1)$-tensor field $\phi ,$a vector field $\xi ,$ a $%
1$-form $\eta $ and a compatible Riemannian metric $g$ satisfying%
\begin{eqnarray}
\phi ^{2} &=&-I+\eta \otimes \xi ,\text{ \ }\phi \xi =0,\text{ }\eta \circ
\phi =0,\text{ \ \ }\eta (\xi )=1,  \label{fi} \\
g(\phi X,\phi Y) &=&g(X,Y)-\eta (X)\eta (Y),  \label{metric} \\
g(\phi X,Y)+g(X,\phi Y) &=&0,\text{\ }\eta (X)=g(X,\xi ),  \label{gfi}
\end{eqnarray}%
for all vector fields $X,Y\in \chi (M).$

An almost contact metric manifold $M$ is said to be a Kenmotsu manifold \cite%
{kenmotsu} if it satisfies%
\begin{equation}
(\nabla _{X}\phi )Y=g(\phi X,Y)\xi -\eta (Y)\phi X,  \label{nambla}
\end{equation}%
where $\nabla $ is Levi-Civita connection of the Riemannian metric $g$. From
the above equation it follows that%
\begin{eqnarray}
\nabla _{X}\xi &=&X-\eta (X)\xi ,  \label{xzeta} \\
(\nabla _{X}\eta )Y &=&g(X,Y)-\eta (X)\eta (Y).  \label{gereksiz}
\end{eqnarray}

A Kenmotsu manifold is normal (that is, the Nijenhuis tensor of $\phi $
equals $-2d\eta \otimes \xi $) but not Sasakian. Moreover, it is also not
compact since from equation (\ref{xzeta}) we get div$\xi =2m$. Finally, the
fundamental $2$-form $\Phi $ is defined by $\Phi (X,Y)=g(X,\phi Y).$ In \cite%
{kenmotsu}, Kenmotsu showed:

$(a)$ that locally a Kenmotsu manifold is a warped product $I\times _{f}N$
of an interval $I$ and a Kaehler manifold $N$ with warping function $%
f(t)=se^{t},$ where $s$ is a nonzero constant.

$(b)$ that a Kenmotsu manifold of constant $\phi $-sectional curvature is a
space of constant curvature $-1$ and so it is locally hyperbolic space.

Now we will give a well known example which is Kenmotsu manifold on $%
\mathbb{R}
^{5}$ by using $(a).$

\begin{example}
\label{KEN}We consider $M=\{(x_{1},x_{2},y_{1},y_{2},z)\in 
\mathbb{R}
^{5}:$ $z\neq 0\}.$ Let $\eta $ be a $1$-form defined by%
\begin{equation*}
\eta =dz.
\end{equation*}%
The characteristic vector field $\xi $ is given by $\frac{\partial }{%
\partial z}$ and its Riemannian metric $g$ in and tensor field $\phi $ are
given by%
\begin{equation*}
g=e^{2z}\sum\limits_{i=1}^{2}((dx_{i})^{2}+(dy_{i})^{2})+(dz)^{2},\text{ \ }%
\phi =\left( 
\begin{array}{ccccc}
0 & 0 & -1 & 0 & 0 \\ 
0 & 0 & 0 & -1 & 0 \\ 
1 & 0 & 0 & 0 & 0 \\ 
0 & 1 & 0 & 0 & 0 \\ 
0 & 0 & 0 & 0 & 0%
\end{array}%
\right)
\end{equation*}%
This gives a Kenmotsu structure on $M$. The vector fields $E_{1}=e^{-z}\frac{%
\partial }{\partial y_{1}},$ $E_{2}=e^{-z}\frac{\partial }{\partial y_{2}},$ 
$E_{3}=e^{-z}\frac{\partial }{\partial x_{1}}$, $E_{4}=e^{-z}\frac{\partial 
}{\partial x_{2}}$ and $E_{5}=$ $\xi $ form a $\phi $-basis for the Kenmotsu
structure. On the other hand, it can be shown that $M(\phi ,\xi ,\eta ,g)$
is a Kenmotsu manifold.
\end{example}

Let $(M,g_{M})$ be an $m$-dimensional Riemannian manifold and let $(N,g_{N})$
be an $n$-dimensional Riemannian manifold. A Riemannian submersion is a
smooth map $F:M\rightarrow N$ which is onto and satisfying the following
axioms:

$S1$. $F$ has maximal rank.

$S2$. The differential $F_{\ast }$ preserves the lenghts of horizontal
vectors.

The fundamental tensors of a submersion were defined by O'Neill (\cite{BO1},%
\cite{BO2}). They are $(1,2)$-tensors on $M$, given by the following
formulas:%
\begin{eqnarray}
\mathcal{T}(E,F) &=&\mathcal{T}_{E}F=\mathcal{H}\nabla _{\mathcal{V}E}%
\mathcal{V}F+\mathcal{V}\nabla _{\mathcal{V}E}\mathcal{H}F,  \label{AT1} \\
\mathcal{A}(E,F) &=&\mathcal{A}_{E}F=\mathcal{V}\nabla _{\mathcal{H}E}%
\mathcal{H}F+\mathcal{H}\nabla _{\mathcal{H}E}\mathcal{V}F,  \label{AT2}
\end{eqnarray}%
for any vector fields $E$ and $F$ on $M.$ Here $\nabla $ denotes the
Levi-Civita connection of $g_{M}$. These tensors are called integrability
tensors for the Riemannian submersions. Note that we denote the projection
morphism on the distributions ker$F_{\ast }$ and $($ker$F_{\ast })^{\perp }$
by $\mathcal{V}$ and $\mathcal{H},$ respectively.

If the second condition $S2.$ can be changed as $F_{\ast }$ restricted to
horizontal distribution of $F$ is a conformal mapping, we get \textit{%
horizontally} \textit{conformal submersion} definition \cite{ORN}. In this
case the second condition can be written in a following way:%
\begin{equation}
g_{M}(X,Y)=e^{2\lambda (p)}g_{N}(F_{\ast }X,F_{\ast }Y),\forall p\in
M,\forall X,Y\in \Gamma ((\ker F_{\ast })^{\bot })\text{, }\exists \lambda
\in C^{\infty }(M).  \label{CONFORMAL}
\end{equation}%
The warped product $M=M_{1}\times _{f}M_{2}$ of two Riemannian manifolds ($%
M_{1}$,$g_{1}$) and ($M_{2}$,$g_{2}$), is the Cartesian product manifold $%
M_{1}\times M_{2},$ endowed with the warped product metric $g=g_{1}+fg_{2}$,
where $f$ is a positive function on $M_{1}$. More precisely, the Riemannian
metric $g$ on $M_{1}\times _{f}M_{2}$ is defined for pairs of vector fields $%
X,Y$ on $M_{1}\times M_{2}$ by%
\begin{equation*}
g(X,Y)=g_{1}(\pi _{1\ast }(X),\pi _{1\ast }(Y))+f^{2}(\pi _{1}(.))g_{2}(\pi
_{2\ast }(X),\pi _{2\ast }(Y)),
\end{equation*}%
where $\pi _{1}:M_{1}\times M_{2}\rightarrow M_{1};(p,q)\rightarrow p$ and $%
\pi _{2}:M_{1}\times M_{2}\rightarrow M_{2};(p,q)\rightarrow q$ are the
canonical projections. We recall that this projections are submersions. If $%
f $ is not a constant function of value $1$, one can prove that second
projection is a conformal submersion whose vertical and horizontal spaces at
any point $(p,q)$ are respectively identified with $T_{p}M_{1},T_{q}M_{2}.$

Let $\mathcal{L}(M_{1})$ and $\mathcal{L}(M_{2})$ be the set of lifts of
vector fields on $M_{1}\ $and $M_{2}$ to $M_{1}\times _{f}M_{2}$
respectively. We use the same notation for a vector field and for its lift.
We denote the Levi-Civita connection of the warped product metric tensor of $%
g$ by $\nabla $.

\begin{proposition}
\cite{BO2}\label{ONEILL}$M=M_{1}\times _{f}M_{2}$ be a warped Riemannian
product manifold with the warping function $f$ on $M_{1}$. If $%
X_{1},Y_{1}\in \mathcal{L}(M_{1})$ and $X_{2},Y_{2}\in \mathcal{L}(M_{2})$,
then

(i)$\nabla _{X_{1}}Y_{1}$ is the lift of $\ \nabla _{X_{1}}^{1}Y_{1},$

(ii)$\nabla _{X_{1}}X_{2}$ $=$ $\nabla _{X_{2}}X_{1}=(X_{1}f/f)X_{2},$

(iii) nor $\nabla _{X_{2}}Y_{2}$ $=-(g(X_{2},Y_{2})/f)gradf$,

(iv) tan $\nabla _{X_{2}}Y_{2}\in \mathcal{L}(M_{2})$ is the lift of $\
\nabla _{X_{2}}^{2}Y_{2},$

where $\nabla ^{1}$ and $\nabla ^{2}$ are Riemannian connections on $M_{1}$
and $M_{2}$, respectively.
\end{proposition}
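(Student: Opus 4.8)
The plan is to derive all four identities from the Koszul formula
\[
2g(\nabla_X Y,Z)=Xg(Y,Z)+Yg(X,Z)-Zg(X,Y)+g([X,Y],Z)-g([X,Z],Y)-g([Y,Z],X),
\]
applied to lifts, after recording three elementary facts. First, brackets of lifts are well behaved: $[X_1,X_2]=0$ for $X_1\in\mathcal{L}(M_1)$ and $X_2\in\mathcal{L}(M_2)$, while $[X_1,Y_1]$ and $[X_2,Y_2]$ are the lifts of the corresponding brackets on $M_1$ and $M_2$. Second, a lift from $M_1$ is $g$-orthogonal to a lift from $M_2$, the function $g(X_1,Y_1)=g_1(X_1,Y_1)\circ\pi_1$ is a pullback from $M_1$, and $g(X_2,Y_2)=(f\circ\pi_1)^2\,(g_2(X_2,Y_2)\circ\pi_2)$ is the product of a function on $M_1$ and a function on $M_2$. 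Third, a lift $X_1\in\mathcal{L}(M_1)$ annihilates functions pulled back from $M_2$, a lift $X_2\in\mathcal{L}(M_2)$ annihilates functions pulled back from $M_1$, and $\mathrm{grad}\,f$ is horizontal with $g(\mathrm{grad}\,f,X_1)=X_1 f$ and $g(\mathrm{grad}\,f,X_2)=0$.

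For (i) and (ii) I would test the Koszul identity against a horizontal lift $Z_1$ and against a vertical lift $Z_2$ in turn. With $X=X_1$, $Y=Y_1$: against $Z_1$ every term is the $\pi_1$-pullback of the matching term of the Koszul formula on $(M_1,g_1)$, so $g(\nabla_{X_1}Y_1,Z_1)=g_1(\nabla^1_{X_1}Y_1,Z_1)\circ\pi_1$; against $Z_2$ all six terms vanish by the three facts above, so $\nabla_{X_1}Y_1$ is horizontal, which together gives (i). For (ii), $[X_1,X_2]=0$ yields $\nabla_{X_1}X_2=\nabla_{X_2}X_1$, so it suffices to compute $\nabla_{X_1}X_2$; testing against $Z_1$ gives $0$, and testing against $Z_2$ leaves only $X_1 g(X_2,Z_2)=2f(X_1 f)(g_2(X_2,Z_2)\circ\pi_2)=2\tfrac{X_1 f}{f}\,g(X_2,Z_2)$, hence $\nabla_{X_1}X_2=\tfrac{X_1 f}{f}X_2$.

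For (iii) and (iv) I would take $X=X_2$, $Y=Y_2$. Against a horizontal lift $Z_1$ the only surviving term is $-Z_1 g(X_2,Y_2)=-2f(Z_1 f)(g_2(X_2,Y_2)\circ\pi_2)=-\tfrac{g(X_2,Y_2)}{f}\,g(\mathrm{grad}\,f,Z_1)$, so $\mathrm{nor}\,\nabla_{X_2}Y_2=-\tfrac{g(X_2,Y_2)}{f}\,\mathrm{grad}\,f$, which is (iii). Against a vertical lift $Z_2$ the factor $(f\circ\pi_1)^2$ pulls out of every term (it is annihilated by $X_2,Y_2,Z_2$ and passes through the $M_2$-brackets), so the right-hand side becomes $(f\circ\pi_1)^2$ times the $\pi_2$-pullback of the Koszul formula on $(M_2,g_2)$, giving $g(\nabla_{X_2}Y_2,Z_2)=f^2\,(g_2(\nabla^2_{X_2}Y_2,Z_2)\circ\pi_2)$, so $\tan\nabla_{X_2}Y_2$ is the lift of $\nabla^2_{X_2}Y_2$, which is (iv). The computation is entirely bookkeeping; the only thing to watch — the "main obstacle" such as it is — is tracking which scalars are pulled back from which factor so that exactly the intended derivative terms vanish, and distributing $X_1$ and $Z_1$ correctly across the product $(f\circ\pi_1)^2(g_2(\cdot,\cdot)\circ\pi_2)$ to produce the warping coefficient and the gradient term.
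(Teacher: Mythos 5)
Your Koszul-formula derivation is correct; the paper states this proposition without proof, citing O'Neill's book, and the argument you give (testing the Koszul identity against horizontal and vertical lifts after recording the bracket, orthogonality, and pullback facts) is essentially the standard proof found in that reference. The only blemish is a dropped factor of $2$ in the displayed chain for (iii) — the surviving term equals $-\tfrac{2\,g(X_{2},Y_{2})}{f}\,g(\mathrm{grad}\,f,Z_{1})$, which then cancels against the $2$ on the left of the Koszul formula — but your stated conclusion is nonetheless the correct one.
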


Now we will introduce the following proposition (\cite{BRICLC},pp.86) for
the Subsection 3.2.

\begin{proposition}
\label{Clark} If $\phi $ is a submersion of $N$ onto $N_{1}$ and if $\psi $:$%
N_{1}\rightarrow N_{2}$ is a differentiable function, then the rank of $\psi
\circ \phi $ at $p$ is equal to the rank of $\psi $ at $\phi (p).$
\end{proposition}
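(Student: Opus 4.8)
The plan is to deduce the rank equality from the chain rule for differentials together with the fact that a submersion has surjective differential at every point. First I would fix a point $p\in N$ and write down the chain rule $(\psi\circ\phi)_{\ast p}=\psi_{\ast\phi(p)}\circ\phi_{\ast p}$, viewing these as linear maps between the corresponding tangent spaces $T_pN$, $T_{\phi(p)}N_1$ and $T_{\psi(\phi(p))}N_2$. The rank of $\psi\circ\phi$ at $p$ is by definition the dimension of the image of the left-hand side, so the task reduces to comparing $\dim\,\mathrm{im}(\psi_{\ast\phi(p)}\circ\phi_{\ast p})$ with $\dim\,\mathrm{im}(\psi_{\ast\phi(p)})$.

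The key step is the observation that because $\phi$ is a submersion, $\phi_{\ast p}\colon T_pN\to T_{\phi(p)}N_1$ is surjective. Hence the image of the composition $\psi_{\ast\phi(p)}\circ\phi_{\ast p}$ equals $\psi_{\ast\phi(p)}(\phi_{\ast p}(T_pN))=\psi_{\ast\phi(p)}(T_{\phi(p)}N_1)$, which is exactly the image of $\psi_{\ast\phi(p)}$. Taking dimensions of both sides yields $\mathrm{rank}_p(\psi\circ\phi)=\mathrm{rank}_{\phi(p)}\psi$, which is the assertion. In short, precomposing a linear map with a surjection does not change its image.

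I would then add a sentence noting that this is a purely pointwise linear-algebra fact, so no differentiability hypothesis beyond the existence of the differentials is actually used, and that it is the surjectivity of $\phi_{\ast p}$ — equivalently, the defining property $S1$ of a submersion that $\phi$ has maximal rank — which does all the work. There is essentially no obstacle here: the only thing to be a little careful about is keeping the base points straight in the chain rule and invoking surjectivity of $\phi_{\ast}$ at the correct point $p$ rather than at $\phi(p)$. This proposition is a standard fact (the reference given is \cite{BRICLC}, p.~86), and its role in Subsection 3.2 will be to transfer rank/maximal-rank information through a warped-product projection when building the horizontally conformal submersion examples.
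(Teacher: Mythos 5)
Your argument is correct and is the standard one: the chain rule gives $(\psi\circ\phi)_{\ast p}=\psi_{\ast\phi(p)}\circ\phi_{\ast p}$, and since $\phi_{\ast p}$ is surjective the composite has the same image, hence the same rank, as $\psi_{\ast\phi(p)}$. The paper itself gives no proof, simply citing \cite{BRICLC}, p.~86, and your reasoning is exactly the argument found there, so there is nothing to add.
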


The following lemmas are well known from (\cite{BO1},\cite{BO2}):

\begin{lemma}
\label{t}For any $U,W$ vertical and $X,Y$ horizontal vector fields, the
tensor fields $\mathcal{T}$ and $\mathcal{A}$ satisfy%
\begin{eqnarray}
i)\mathcal{T}_{U}W &=&\mathcal{T}_{W}U,  \label{TUW} \\
ii)\mathcal{A}_{X}Y &=&-\mathcal{A}_{Y}X=\frac{1}{2}\mathcal{V}\left[ X,Y%
\right] .  \label{TUW2}
\end{eqnarray}
\end{lemma}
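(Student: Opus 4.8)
The plan is to prove Lemma~\ref{t} by working directly from the definitions \eqref{AT1} and \eqref{AT2} of the O'Neill tensors $\mathcal{T}$ and $\mathcal{A}$, together with the fact that $\nabla$ is torsion-free.

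For part $(i)$, the claim $\mathcal{T}_U W = \mathcal{T}_W U$ for vertical $U,W$: since $U,W$ are vertical, we have $\mathcal{V}U=U$, $\mathcal{V}W=W$, $\mathcal{H}W=0$, $\mathcal{H}U=0$, so the definition \eqref{AT1} collapses to $\mathcal{T}_U W = \mathcal{H}\nabla_U W$ and $\mathcal{T}_W U = \mathcal{H}\nabla_W U$. Then $\mathcal{T}_U W - \mathcal{T}_W U = \mathcal{H}(\nabla_U W - \nabla_W U) = \mathcal{H}[U,W]$. The key observation is that the vertical distribution $\ker F_\ast$ is integrable (its leaves are the fibers $F^{-1}(q)$), so $[U,W]$ is again vertical and $\mathcal{H}[U,W]=0$. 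This gives $(i)$.

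For part $(ii)$, the claim $\mathcal{A}_X Y = -\mathcal{A}_Y X = \tfrac12\mathcal{V}[X,Y]$ for horizontal $X,Y$: as above, \eqref{AT2} reduces to $\mathcal{A}_X Y = \mathcal{V}\nabla_X Y$ since $\mathcal{H}X=X$, $\mathcal{V}Y=0$. Then $\mathcal{A}_X Y - \mathcal{A}_Y X = \mathcal{V}(\nabla_X Y - \nabla_Y X) = \mathcal{V}[X,Y]$. For the alternating property I still need $\mathcal{A}_X Y + \mathcal{A}_Y X = 0$, i.e. $\mathcal{V}(\nabla_X Y + \nabla_Y X)=0$; this follows from the symmetric-bilinear version: for a third horizontal field $Z$, one checks $g_M(\nabla_X Y, Z)$ using the Koszul formula and the fact that $[X,Y]$ has vertical and horizontal parts, but the cleaner route is to use that $F$ being a Riemannian submersion means $g_M(\nabla_X Y, V) = -g_M(Y,\nabla_X V)$ and the symmetry/antisymmetry is forced by combining with part $(i)$'s integrability input applied to the horizontal lifts of basic fields. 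Concretely, $\mathcal{A}_X Y + \mathcal{A}_Y X = \mathcal{V}(\nabla_X Y + \nabla_Y X)$, and for basic horizontal $X,Y$ one has $\nabla_X Y + \nabla_Y X = \nabla_X Y + \nabla_Y X$ whose vertical part, paired against vertical $V$, equals $-2g_M(\mathcal{A}_X Y - \tfrac12\mathcal{V}[X,Y], V)$—so it suffices to verify vanishing on basic fields and extend by tensoriality (both sides of the lemma are tensorial in $X,Y$, so it is enough to check on basic lifts).

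The main obstacle is the antisymmetry $\mathcal{A}_X Y = -\mathcal{A}_Y X$: the identity $\mathcal{A}_X Y - \mathcal{A}_Y X = \mathcal{V}[X,Y]$ is immediate from torsion-freeness, but showing that $\mathcal{A}$ is alternating (equivalently that $\mathcal{V}(\nabla_X Y + \nabla_Y X)$ depends only on the point, not on the extension, and in fact vanishes for basic fields) requires the standard argument that for basic horizontal vector fields $X,Y$ the bracket $[X,Y]$ has horizontal part basic (projecting to $[F_\ast X, F_\ast Y]$) while its vertical part is $2\mathcal{A}_X Y$; combined with the tensoriality reduction this pins down $\mathcal{A}_X Y = \tfrac12 \mathcal{V}[X,Y]$ and hence the antisymmetry. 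Since this is a classical result of O'Neill we will simply cite \cite{BO1},\cite{BO2} and indicate these steps rather than reproduce the full computation.
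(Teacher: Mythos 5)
First, note that the paper itself gives no proof of this lemma: it is stated with the sentence ``The following lemmas are well known from (\cite{BO1},\cite{BO2})'' and left at that, so your decision to fall back on O'Neill is consistent with the source. Your part $(i)$ is complete and correct: for vertical $U,W$ the definition (\ref{AT1}) collapses to $\mathcal{T}_{U}W=\mathcal{H}\nabla _{U}W$, and $\mathcal{H}[U,W]=0$ because the fibers integrate the vertical distribution.

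The genuine gap is in part $(ii)$. The identity $\mathcal{A}_{X}Y-\mathcal{A}_{Y}X=\mathcal{V}[X,Y]$ is indeed immediate from torsion-freeness, but your argument for the alternation $\mathcal{A}_{X}Y+\mathcal{A}_{Y}X=0$ never closes. The ``concrete'' step you offer is circular: $g_{M}(\mathcal{V}(\nabla _{X}Y+\nabla _{Y}X),V)=g_{M}(\mathcal{A}_{X}Y+\mathcal{A}_{Y}X,V)=2\,g_{M}(\mathcal{A}_{X}Y-\tfrac{1}{2}\mathcal{V}[X,Y],V)$ (note your sign is also off), so you have merely rewritten the quantity to be shown zero as twice itself. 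The missing ingredient is the one point where the Riemannian-submersion hypothesis actually enters: by tensoriality it suffices to take $X$ basic, and then $g_{M}(\mathcal{A}_{X}X,V)=g_{M}(\nabla _{X}X,V)=-g_{M}(X,\nabla _{X}V)=-g_{M}(X,\nabla _{V}X)-g_{M}(X,[X,V])=-\tfrac{1}{2}V\bigl(g_{M}(X,X)\bigr)$, where $[X,V]$ is vertical by Lemma \ref{basic} $(iv)$; since $X$ is basic, $g_{M}(X,X)=g_{N}(X_{\ast },X_{\ast })\circ F$ is constant along the fibers, so $V(g_{M}(X,X))=0$ and hence $\mathcal{A}_{X}X=0$. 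Polarizing gives the alternation, and combining with the torsion identity yields $\mathcal{A}_{X}Y=\tfrac{1}{2}\mathcal{V}[X,Y]$. Without this step (or an explicit citation of precisely this fact from \cite{BO1}), your proof of $(ii)$ is incomplete.
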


It is easy to see that $\mathcal{T}$ $\ $is vertical, $\mathcal{T}_{E}=%
\mathcal{T}_{\mathcal{V}E},$ and $\mathcal{A}$ is horizontal, $\mathcal{A=A}%
_{\mathcal{H}E}$.

For each $q\in N,$ $F^{-1}(q)$ is an $(m-n)$-dimensional submanifold of $M$.
The submanifolds $F^{-1}(q)$ are called fibers. A vector field on $M$ is
called vertical if it is always tangent to fibers. A vector field on $M$ is
called horizontal if it is always orthogonal to fibers. A vector field $X$
on $M$ is called basic if $X$ is horizontal and $F$-related to a vector
field $X_{\ast }$ on $N,$ i. e., $F_{\ast }X_{p}=X_{\ast F(p)}$ for all $%
p\in M.$

\begin{lemma}
\label{basic}Let $F:(M,g_{M})\rightarrow (N,g_{N})$ be a Riemannian
submersion. If $\ X,$ $Y$ are basic vector fields on $M$, then
\end{lemma}

$i)$ $g_{M}(X,Y)=g_{N}(X_{\ast },Y_{\ast })\circ F,$

$ii)$ $\mathcal{H}[X,Y]$ is basic and $F$-related to $[X_{\ast },Y_{\ast }]$,

$iii)$ $\mathcal{H}(\nabla _{X}Y)$ is a basic vector field corresponding to $%
\nabla _{X_{\ast }}^{^{\ast }}Y_{\ast }$ where $\nabla ^{\ast }$ is the
connection on $N.$

$iv)$ for any vertical vector field $V$, $[X,V]$ is vertical.

Moreover, if $X$ is basic and $U$ is vertical, then $\mathcal{H}(\nabla
_{U}X)=\mathcal{H}(\nabla _{X}U)=\mathcal{A}_{X}U.$ On the other hand, from (%
\ref{AT1}) and (\ref{AT2}) we have%
\begin{eqnarray}
\nabla _{V}W &=&\mathcal{T}_{V}W+\hat{\nabla}_{V}W,  \label{1} \\
\nabla _{V}X &=&\mathcal{H\nabla }_{V}X+\mathcal{T}_{V}X,  \label{2} \\
\nabla _{X}V &=&\mathcal{A}_{X}V+\mathcal{V}\nabla _{X}V,  \label{3} \\
\nabla _{X}Y &=&\mathcal{H\nabla }_{X}Y+\mathcal{A}_{X}Y,  \label{4}
\end{eqnarray}%
for $X,Y\in \Gamma ((\ker F_{\ast })^{\bot })$ and $V,W\in \Gamma (\ker
F_{\ast }),$ where $\hat{\nabla}_{V}W=\mathcal{V}\nabla _{V}W.$

Notice that $\mathcal{T}$ acts on the fibres as the second fundamental form
of the submersion and restricted to vertical vector fields and it can be
easily seen that $\mathcal{T}=0$ is equivalent to the condition that the
fibres are totally geodesic. A Riemannian submersion is called a Riemannian
submersion with totally geodesic fibers if $\mathcal{T}$ $\ $vanishes
identically. Let $U_{1},...,U_{m-n}$ be an orthonormal frame of $\Gamma
(\ker F_{\ast }).$ Then the horizontal vector field $H$ $=\frac{1}{m-n}%
\sum\limits_{j=1}^{m-n}\mathcal{T}_{U_{j}}U_{j}$ is called the mean
curvature vector field of the fiber. If \ $H$ $=0,$ then the Riemannian
submersion is said to be minimal. A Riemannian submersion is called a
Riemannian submersion with totally umbilical fibers if 
\begin{equation}
\mathcal{T}_{U}W=g_{M}(U,W)H,  \label{4a}
\end{equation}%
for $U,W\in $ $\Gamma (\ker F_{\ast })$. For any $E\in \Gamma (TM),\mathcal{T%
}_{E\text{ }}$and $\mathcal{A}_{E}$ are skew-symmetric operators on $(\Gamma
(TM),g_{M})$ reversing the horizontal and the vertical distributions. By
Lemma $1,$ horizontal distribution $\mathcal{H}$ is integrable if and only
if \ $\mathcal{A=}0$. For any $D,E,G\in \Gamma (TM),$ one has%
\begin{equation}
g(\mathcal{T}_{D}E,G)+g(\mathcal{T}_{D}G,E)=0  \label{4b}
\end{equation}%
and%
\begin{equation}
g(\mathcal{A}_{D}E,G)+g(\mathcal{A}_{D}G,E)=0.  \label{4c}
\end{equation}

Finally, we recall the notion of harmonic maps between Riemannian manifolds.
Let $(M,g_{M})$ and $(N,g_{N})$ be Riemannian manifolds and supposed that $%
\varphi :M\rightarrow N$ is a smooth map between them. Then the differential 
$\varphi _{\ast }$ of $\varphi $ can be viewed as a section of the bundle $\
Hom(TM,\varphi ^{-1}TN)\rightarrow M,$ where $\varphi ^{-1}TN$ is the
pullback bundle which has fibres $(\varphi ^{-1}TN)_{p}=T_{\varphi (p)}N,$ $%
p\in M.\ Hom(TM,\varphi ^{-1}TN)$ has a connection $\nabla $ induced from
the Levi-Civita connection $\nabla ^{M}$ and the pullback connection. Then
the second fundamental form of $\varphi $ is given by 
\begin{equation}
(\nabla \varphi _{\ast })(X,Y)=\nabla _{X}^{\varphi }\varphi _{\ast
}(Y)-\varphi _{\ast }(\nabla _{X}^{M}Y),  \label{5}
\end{equation}%
for $X,Y\in \Gamma (TM),$ where $\nabla ^{\varphi }$ is the pullback
connection. It is known that the second fundamental form is symmetric. If $%
\varphi $ is a Riemannian submersion, it can be easily proved that 
\begin{equation}
(\nabla \varphi _{\ast })(X,Y)=0,  \label{5a}
\end{equation}%
for $X,Y\in \Gamma ((\ker F_{\ast })^{\bot })$. A smooth map $\varphi
:(M,g_{M})\rightarrow (N,g_{N})$ is said to be harmonic if $trace(\nabla
\varphi _{\ast })=0.$ On the other hand, the tension field of $\varphi $ is
the section $\tau (\varphi )$ of $\Gamma (\varphi ^{-1}TN)$ defined by%
\begin{equation}
\tau (\varphi )=div\varphi _{\ast }=\sum_{i=1}^{m}(\nabla \varphi _{\ast
})(e_{i},e_{i}),  \label{6}
\end{equation}%
where $\left\{ e_{1},...,e_{m}\right\} $ is the orthonormal frame on $M$.
Then it follows that $\varphi $ is harmonic if and only if $\tau (\varphi
)=0 $, (for details, see \cite{B}).

Let $g$ be a Riemannian metric tensor on the manifold $M=M_{1}\times M_{2}$
and assume that the canonical foliations \ $D_{M_{1}}$ and $D_{M_{2}}$
intersect perpendicularly everywhere. Then $g$ is the metric tensor of a
usual product of Riemannian manifolds if and only if $D_{M_{1}}$ and $%
D_{M_{2}}$ are totally geodesic foliations \cite{PON}.

\section{\textbf{Anti-invariant Riemannian submersions}}

In this section, we are going to define anti-invariant Riemannian
submersions from Kenmotsu manifolds and investigate the geometry of such
submersions.

\begin{definition}
\label{anti}Let $M(\phi ,\xi ,\eta ,g_{M})$ be a Kenmotsu manifold and $%
(N,g_{N})$ a Riemannian manifold. A Riemannian submersion $F:M(\phi ,\xi
,\eta ,g_{M})\rightarrow $ $(N,g_{N})$ is called an anti-invariant
Riemannian submersion if $\ker F_{\ast }$ is anti-invariant with respect to $%
\phi $, i.e. $\phi (\ker F_{\ast })\subseteq (\ker F_{\ast })^{\bot }.$
\end{definition}

Let $F:M(\phi ,\xi ,\eta ,g_{M})\rightarrow $ $(N,g_{N})$ be an
anti-invariant Riemannian submersion from a Kenmotsu manifold $M(\phi ,\xi
,\eta ,g_{M})$ to a Riemannian manifold $(N,g_{N}).$ First of all, from
Definition \ref{anti}, we have $\phi (\ker F_{\ast })^{\bot }\cap (\ker
F_{\ast })\neq \left\{ 0\right\} .$ We denote the complementary orthogonal
distribution to $\phi (\ker F_{\ast })$ in $(\ker F_{\ast })^{\bot }$ by $%
\mu .$ Then we have%
\begin{equation}
(\ker F_{\ast })^{\bot }=\phi \ker F_{\ast }\oplus \mu .  \label{A1}
\end{equation}

\subsection{\textbf{Anti-invariant Riemannian submersions admitting
horizontal structure vector field }}

In this subsection, we will study anti-invariant Riemannian submersions from
a Kenmotsu manifold onto a Riemannian manifold such that the characteristic
vector field $\xi $ is a horizontal vector field. Using (\ref{A1}), we have $%
\mu =\phi \mu \oplus \{\xi \}.$ For any horizontal vector field $X$ we put 
\begin{equation}
\phi X=BX+CX,  \label{IREM}
\end{equation}%
where $BX\in \Gamma (\ker F_{\ast })$ and $CX\in \Gamma (\mu ).$

Now we suppose that $V$ is vertical and $X$ is horizontal vector field.
Using above relation and (\ref{metric}), we obtain%
\begin{equation}
g_{M}(CX,\phi V)=0.  \label{IKE3}
\end{equation}%
By virtue of (\ref{metric}) and (\ref{IREM}), we get 
\begin{eqnarray}
g_{M}(CX,\phi U) &=&g_{M}(\phi X-BX,\phi U)  \label{6.5} \\
&=&g_{M}(X,U)-\eta (X)\eta (U)-g_{M}(BX,\phi U).  \notag
\end{eqnarray}%
Since $\phi U\in \Gamma ((\ker F_{\ast })^{\bot })$ and $\xi \in \Gamma
(\ker F_{\ast })^{\bot },$ (\ref{6.5}) implies (\ref{IKE3}). From this last
relation we have $g_{N}(F_{\ast }\phi V,F_{\ast }CX)=0$ which implies that 
\begin{equation}
TN=F_{\ast }(\phi (\ker F_{\ast }))\oplus F_{\ast }(\mu ).  \label{Ba}
\end{equation}%
The proof of the following result is the same as Theorem $10$ of \cite{cm},
therefore we omit its proof.

\begin{theorem}
\label{dimhor}Let $M(\phi ,\xi ,\eta ,g_{M})$ be a Kenmotsu manifold \ of
dimension $2m+1$ and $(N,g_{N})$ a Riemannian manifold of dimension $n.$ Let 
$F:M(\phi ,\xi ,\eta ,g_{M})\rightarrow $ $(N,g_{N})$ be an anti-invariant
Riemannian submersion such that $(\ker F_{\ast })^{\bot }=\phi \ker F_{\ast
}\oplus \{\xi \}.$Then $m+1=n$.
\end{theorem}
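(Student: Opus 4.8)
The plan is to exploit the orthogonal decomposition $(\ker F_*)^\bot = \phi\ker F_* \oplus \{\xi\}$ together with the anti-invariance $\phi(\ker F_*)\subseteq (\ker F_*)^\bot$ and the compatibility relations \eqref{fi}--\eqref{gfi}. Write $\dim\ker F_* = m-n+1$ wait—more carefully, let $r=\dim(\ker F_*)$; since $F$ is a Riemannian submersion onto an $n$-dimensional manifold from a $(2m+1)$-dimensional manifold, $r = 2m+1-n$. The key is to pin down $r$ by showing $\phi$ restricts to an injective map from $\ker F_*$ into $(\ker F_*)^\bot$, and that its image together with $\mathrm{span}\{\xi\}$ fills $(\ker F_*)^\bot$, giving $\dim(\ker F_*)^\bot = r+1$, i.e. $n = r+1$. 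Combining $n=r+1$ with $r=2m+1-n$ yields $2n = 2m+2$, hence $n=m+1$.

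**The key steps.** First I would show $\phi|_{\ker F_*}$ is injective: if $V\in\ker F_*$ with $\phi V=0$, then applying $\phi$ and using \eqref{fi} gives $-V+\eta(V)\xi=0$, so $V$ is a multiple of $\xi$; but $\xi$ is horizontal by hypothesis, so $V=0$ (as $V$ is also vertical). Hence $\dim\phi(\ker F_*) = r$. Second, I would verify $\phi(\ker F_*)\perp\xi$: for $V$ vertical, $g_M(\phi V,\xi) = -g_M(V,\phi\xi) = 0$ by \eqref{gfi} and $\phi\xi=0$. So $\phi(\ker F_*)\oplus\{\xi\}$ is a direct sum of dimension $r+1$ sitting inside $(\ker F_*)^\bot$. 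Third — and this is where the hypothesis $(\ker F_*)^\bot = \phi\ker F_* \oplus\{\xi\}$ is used as an equality, not just an inclusion — we get $\dim(\ker F_*)^\bot = r+1$. Since $\dim(\ker F_*)^\bot = n$ (the horizontal distribution has the dimension of the base), $n = r+1$.

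**Closing the computation.** Finally, substitute $r = \dim\ker F_* = (2m+1) - n$ into $n = r+1$ to obtain $n = (2m+1) - n + 1 = 2m+2-n$, so $2n = 2m+2$, and therefore $n = m+1$.

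**Main obstacle.** The computation itself is routine; the only subtlety is making sure the two "halves" $\phi(\ker F_*)$ and $\{\xi\}$ are genuinely complementary and genuinely exhaust $(\ker F_*)^\bot$ — the former needs the injectivity argument (which crucially uses that $\xi$ is \emph{not} vertical), and the latter is handed to us as a standing assumption of the theorem. I expect the bookkeeping around which space has which dimension (especially keeping $\dim\ker F_* = 2m+1-n$ straight) to be the place where a careless reader could slip. Since the excerpt explicitly says this is "the same as Theorem 10 of \cite{cm}" and omits the proof, a full write-up is not expected here; the dimension-counting sketch above is the substance of it.
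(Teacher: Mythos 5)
Your proposal is correct and is exactly the standard dimension count that the paper delegates to Theorem 10 of \cite{cm}: injectivity of $\phi$ on $\ker F_{\ast }$ (using that $\xi $ is horizontal), orthogonality of $\phi \ker F_{\ast }$ to $\xi $, and then $n=\dim (\ker F_{\ast })^{\bot }=\dim \ker F_{\ast }+1=(2m+1-n)+1$. No issues.
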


\begin{remark}
\label{remhor}We note that Example \ref{ex2} satisfies Theorem \ref{dimhor}.
\end{remark}

\begin{lemma}
\label{horAT}Let $F$ be an anti-invariant Riemannian submersion from a
Kenmotsu manifold $M(\phi ,\xi ,\eta ,g_{M})$ to a Riemannian manifold $%
(N,g_{N})$. Then we have%
\begin{equation}
\mathcal{A}_{X}\xi =0,  \label{IKE1}
\end{equation}%
\begin{equation}
\mathcal{T}_{U}\xi =U,  \label{IKE2}
\end{equation}%
\begin{equation}
g_{M}(\nabla _{Y}CX,\phi U)=-g_{M}(CX,\phi \mathcal{A}_{Y}U),  \label{IKE4}
\end{equation}%
for $X,Y\in \Gamma ((\ker F_{\ast })^{\bot })$ and $U\in \Gamma (\ker
F_{\ast }).$
\end{lemma}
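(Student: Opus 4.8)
The plan is to derive all three identities directly from the Kenmotsu structure equations, specifically the formula $\nabla_X \xi = X - \eta(X)\xi$ from (\ref{xzeta}), together with the O'Neill decomposition formulas (\ref{1})--(\ref{4}) and the definitions (\ref{AT1})--(\ref{AT2}) of $\mathcal{A}$ and $\mathcal{T}$.

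For (\ref{IKE1}) and (\ref{IKE2}), I would start by noting that since $\xi$ is horizontal, for a horizontal vector field $X$ the formula (\ref{4}) gives $\nabla_X \xi = \mathcal{H}\nabla_X\xi + \mathcal{A}_X\xi$, so that $\mathcal{A}_X\xi = \mathcal{V}(\nabla_X\xi)$. But (\ref{xzeta}) says $\nabla_X\xi = X - \eta(X)\xi$, which is purely horizontal; hence its vertical part vanishes and $\mathcal{A}_X\xi = 0$. Similarly, for a vertical vector field $U$, formula (\ref{2}) gives $\nabla_U \xi = \mathcal{H}\nabla_U\xi + \mathcal{T}_U\xi$, so $\mathcal{T}_U\xi = \mathcal{H}(\nabla_U\xi)$. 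Again (\ref{xzeta}) gives $\nabla_U\xi = U - \eta(U)\xi = U$ (since $U$ is vertical and $\xi$ horizontal, $\eta(U) = g_M(U,\xi) = 0$), and $U$ is vertical, so... wait — this would give $\mathcal{T}_U\xi = \mathcal{H}(U) = 0$, not $U$. So I need to be careful: the claimed identity $\mathcal{T}_U\xi = U$ must come from the \emph{other} piece, $\mathcal{V}(\nabla_U\xi)$. Indeed $\mathcal{T}_U$ reverses horizontal and vertical, so $\mathcal{T}_U\xi$ (applied to the horizontal vector $\xi$) lands in the vertical distribution and equals $\mathcal{V}\nabla_{\mathcal{V}U}\mathcal{H}\xi = \mathcal{V}\nabla_U\xi = \mathcal{V}(U - \eta(U)\xi) = U$. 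That is the correct reading of (\ref{2}): $\nabla_U X = \mathcal{H}\nabla_U X + \mathcal{T}_U X$ with $\mathcal{T}_U X$ vertical when $X$ is horizontal. So (\ref{IKE2}) follows cleanly, and my initial worry was just a misreading of which summand $\mathcal{T}_U\xi$ sits in.

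For (\ref{IKE4}), I would take $X, Y \in \Gamma((\ker F_\ast)^\bot)$ and $U \in \Gamma(\ker F_\ast)$, and compute $g_M(\nabla_Y CX, \phi U)$. The idea is to move the derivative off $CX$ using metric compatibility: $g_M(\nabla_Y CX, \phi U) = Y\, g_M(CX, \phi U) - g_M(CX, \nabla_Y \phi U)$. The first term vanishes by (\ref{IKE3}) (or its consequence that $g_M(CX,\phi U)=0$ identically). For the second term, I would expand $\nabla_Y \phi U$ using the Kenmotsu covariant derivative formula (\ref{nambla}): $\nabla_Y(\phi U) = (\nabla_Y\phi)U + \phi\nabla_Y U = g_M(\phi Y, U)\xi - \eta(U)\phi Y + \phi\nabla_Y U$. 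Pairing with $CX \in \Gamma(\mu) \subseteq (\ker F_\ast)^\bot$: the term $g_M(\phi Y, U)\xi$ contributes $g_M(\phi Y,U)\,g_M(CX,\xi)$, which is zero since $CX \in \Gamma(\mu)$ and $\mu \perp \xi$ (recall $\mu = \phi\mu \oplus \{\xi\}$, so $CX$ has no $\xi$-component); the term $-\eta(U)\phi Y$ drops because $\eta(U) = g_M(U,\xi) = 0$ as $U$ is vertical and $\xi$ horizontal. That leaves $-g_M(CX, \phi\nabla_Y U)$. Now I write $\nabla_Y U = \mathcal{A}_Y U + \mathcal{V}\nabla_Y U$ by (\ref{3}), so $\phi\nabla_Y U = \phi\mathcal{A}_Y U + \phi\mathcal{V}\nabla_Y U$. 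The vertical part $\mathcal{V}\nabla_Y U$ lies in $\ker F_\ast$, so $\phi\mathcal{V}\nabla_Y U \in \phi(\ker F_\ast)$, and $g_M(CX, \phi\mathcal{V}\nabla_Y U) = 0$ because $CX \in \Gamma(\mu)$ and $\mu \perp \phi(\ker F_\ast)$ by definition of $\mu$. Hence $g_M(\nabla_Y CX, \phi U) = -g_M(CX, \phi\mathcal{A}_Y U)$, which is exactly (\ref{IKE4}).

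The only mildly delicate points are bookkeeping ones: making sure $\eta(U) = 0$ and $g_M(CX,\xi) = 0$ are invoked correctly (both follow from $U$ vertical, $\xi$ horizontal, and $CX \in \Gamma(\mu)$ with $\mu \perp \xi$), and reading (\ref{1})--(\ref{4}) with the right convention for which summand is horizontal versus vertical. No genuine obstacle is expected; the proof is a direct computation once the Kenmotsu identity (\ref{nambla})/(\ref{xzeta}) and the O'Neill splitting are in hand.
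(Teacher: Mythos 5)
Your argument is correct and follows essentially the same route as the paper: (\ref{IKE1}) and (\ref{IKE2}) come from (\ref{xzeta}) together with the O'Neill decompositions (\ref{4}) and (\ref{2}), and (\ref{IKE4}) from metric compatibility, (\ref{IKE3}), (\ref{nambla}) and (\ref{3}), with the leftover term killed because $\phi(\mathcal{V}\nabla_{Y}U)\in\phi(\ker F_{\ast})$ is orthogonal to $CX\in\Gamma(\mu)$. One small correction to your bookkeeping: the justification of $g_{M}(CX,\xi)=0$ via ``$\mu\perp\xi$'' is not right, since in this subsection $\xi\in\mu$ (recall $\mu=\phi\mu\oplus\{\xi\}$); the correct reason is $\eta(CX)=\eta(\phi X)-\eta(BX)=0$, because $\eta\circ\phi=0$ and $BX$ is vertical while $\xi$ is horizontal.
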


\begin{proof}
Using (\ref{4}) and (\ref{xzeta}), we have (\ref{IKE1}). Using (\ref{2}) and
(\ref{xzeta}), we obtain (\ref{IKE2}). Now using (\ref{IKE3}), we get%
\begin{equation*}
g_{M}(\nabla _{Y}CX,\phi U)=-g_{M}(CX,\nabla _{Y}\phi U),
\end{equation*}%
for $X,Y\in \Gamma ((\ker F_{\ast })^{\bot })$ and $U\in \Gamma (\ker
F_{\ast })$. Then (\ref{3}) and (\ref{nambla}) imply that 
\begin{equation*}
g_{M}(\nabla _{Y}CX,\phi U)=-g_{M}(CX,\phi \mathcal{A}_{Y}U)-g_{M}(CX,\phi (%
\mathcal{V}\nabla _{Y}U)).
\end{equation*}%
Since $\phi (\mathcal{V}\nabla _{Y}U)\in \Gamma ((\ker F_{\ast })^{\bot }),$
we obtain (\ref{IKE4}).
\end{proof}

We now study the integrability of the distribution $(\ker F_{\ast })^{\bot }$
and then we investigate the geometry of leaves of $\ker F_{\ast }$ and $%
(\ker F_{\ast })^{\bot }$.

\begin{theorem}
\label{teohor}Let $F$ be an anti-invariant Riemannian submersion from a
Kenmotsu manifold $M(\phi ,\xi ,\eta ,g_{M})$ to a Riemannian manifold $%
(N,g_{N})$. Then the following assertions are equivalent to each other:
\end{theorem}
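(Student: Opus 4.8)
The statement of Theorem \ref{teohor} is incomplete in the excerpt — it promises a list of equivalent assertions concerning the integrability of $(\ker F_\ast)^\perp$ but the list itself is cut off. Based on the structure of the paper and the analogous results in the anti-invariant submersion literature (cf. \c{S}ahin \cite{SAHIN}), the equivalent conditions should be: (i) $(\ker F_\ast)^\perp$ is integrable; (ii) $g_M(\mathcal{A}_X\phi Y - \mathcal{A}_Y\phi X,\phi V)=0$ for all $X,Y\in\Gamma((\ker F_\ast)^\perp)$ and $V\in\Gamma(\ker F_\ast)$ (or the equivalent formulation with $\nabla^N$ and the second fundamental form of $F$); (iii) something expressed via $B$, $C$ and $\mathcal{A}$. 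I will write the proof as if conditions (i)–(iii) are stated, making the proof robust to the precise phrasing.

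The plan is to work directly from the structural equations. First I would fix $X,Y\in\Gamma((\ker F_\ast)^\perp)$ and $V\in\Gamma(\ker F_\ast)$, and note that $(\ker F_\ast)^\perp$ is integrable if and only if $g_M([X,Y],V)=0$ for all such $X,Y,V$. Since $\mathcal{A}_X Y - \mathcal{A}_Y X = \mathcal{V}[X,Y]$ by Lemma \ref{t} ii), this is equivalent to $g_M(\mathcal{A}_X Y,V)=g_M(\mathcal{A}_Y X,V)$. The next step is to rewrite $g_M([X,Y],V)$ using the Kenmotsu parallelism. Write $g_M([X,Y],V) = g_M(\nabla_X Y,V)-g_M(\nabla_Y X,V)$, then apply $\phi$: using \eqref{metric} and \eqref{gfi}, $g_M(\nabla_X Y,V) = g_M(\phi\nabla_X Y,\phi V) + \eta(\nabla_X Y)\eta(V)$. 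Since $\xi$ is horizontal and $V$ is vertical, $\eta(V)=g_M(V,\xi)=0$, so the $\eta$-term drops. Then $\phi\nabla_X Y = \nabla_X(\phi Y) - (\nabla_X\phi)Y = \nabla_X(\phi Y) - g_M(\phi X,Y)\xi + \eta(Y)\phi X$ by \eqref{nambla}. Again pairing with $\phi V$ kills the $\xi$-term, so $g_M(\nabla_X Y,V) = g_M(\nabla_X(\phi Y),\phi V) + \eta(Y)g_M(\phi X,\phi V)$.

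Now I would decompose $\phi Y = BY + CY$ via \eqref{IREM}. The term $g_M(\nabla_X(BY),\phi V)$: since $BY\in\Gamma(\ker F_\ast)$ and $\phi V\in\Gamma((\ker F_\ast)^\perp)$, use \eqref{3} to get $g_M(\nabla_X BY,\phi V) = g_M(\mathcal{A}_X BY,\phi V)$. The term $g_M(\nabla_X(CY),\phi V)$ is handled by Lemma \ref{horAT}, equation \eqref{IKE4}: $g_M(\nabla_X CY,\phi V) = -g_M(CY,\phi\mathcal{A}_X V)$. Subtracting the analogous expression with $X,Y$ swapped, and using that $\mathcal{A}_X Y = -\mathcal{A}_Y X$ together with $\eta(Y)g_M(\phi X,\phi V)-\eta(X)g_M(\phi Y,\phi V)$ — which, since $\xi$ is horizontal and $V$ vertical, reduces further — I obtain a clean identity expressing $g_M(\mathcal{V}[X,Y],V)$ (equivalently $g_M([X,Y],V)$ after summing over a vertical frame $\phi V$, using \eqref{Ba}) in terms of the $\mathcal{A}$-tensor applied to $BX,BY,CX,CY$. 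Setting this to zero gives the equivalence of (i) with (ii) and (iii).

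The main obstacle I anticipate is bookkeeping rather than conceptual: keeping track of which $\eta$-terms vanish (they all do, because $\xi\perp\ker F_\ast$) and correctly applying \eqref{IKE4} with the roles of the arguments in the right order, since $\mathcal{A}$ is antisymmetric on horizontal vectors but $\mathcal{A}_X V$ for $V$ vertical is a different operator. I would also need to be careful that $\phi V$ ranges over a spanning set of $\phi(\ker F_\ast)$ and that the $\mu$-component of $[X,Y]$ plays no role since we only test against vertical $V$, and $g_M([X,Y],V)$ is determined by testing against $\phi V$ and any vertical complement — but by anti-invariance $\ker F_\ast = \phi(\phi\ker F_\ast)$-type identities let $\phi V$ suffice up to the $\xi$-direction, which is horizontal, so there is no loss. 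Once the identity is assembled, each implication (i)$\Rightarrow$(ii)$\Rightarrow$(iii)$\Rightarrow$(i) is immediate by reading it in the appropriate direction.
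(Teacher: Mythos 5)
Your proposal follows essentially the same route as the paper's proof: convert $g_M([X,Y],V)$ into $g_M(\nabla_X\phi Y,\phi V)-g_M(\nabla_Y\phi X,\phi V)$ via \eqref{metric} and \eqref{nambla} (all $\eta$-terms vanishing because $\xi$ is horizontal), decompose $\phi Y=BY+CY$ by \eqref{IREM}, treat the $B$-part with \eqref{3} and the $C$-part with \eqref{IKE4}, and antisymmetrize — which yields exactly the paper's condition (iii), $g_M(\mathcal{A}_X BY-\mathcal{A}_Y BX,\phi V)=g_M(CY,\phi\mathcal{A}_X V)-g_M(CX,\phi\mathcal{A}_Y V)$, with the second-fundamental-form version (ii) obtained from \eqref{5} just as in the paper. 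The argument is correct and matches the paper's; the only caveat is that the literal formula you guessed for condition (ii) ($g_M(\mathcal{A}_X\phi Y-\mathcal{A}_Y\phi X,\phi V)=0$) is not the paper's phrasing, but your derivation produces the correct identity regardless.
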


$\ i)$ $(\ker F_{\ast })^{\bot }$ \textit{is integrable,}

$\ ii)~g_{N}((\nabla F_{\ast })(Y,BX),F_{\ast }\phi V)=g_{N}((\nabla F_{\ast
})(X,BX),F_{\ast }\phi V)$

\ \ \ \ \ \ \ \ \ \ \ \ \ \ \ \ \ \ \ \ \ \ \ \ \ \ \ \ \ \ \ \ \ \ \ \ \ \
\ \ \ \ \ \ $+g_{M}(CY,\phi \mathcal{A}_{X}V)-g_{M}(CX,\phi \mathcal{A}%
_{Y}V),$

$iii)$ $g_{M}(\mathcal{A}_{X}BY-\mathcal{A}_{Y}BX,\phi V)=g_{M}(CY,\phi 
\mathcal{A}_{X}V)-g_{M}(CX,\phi \mathcal{A}_{Y}V)$

\textit{for }$X,Y\in \Gamma ((\ker F_{\ast })^{\bot })$\textit{\ and }$V\in
\Gamma (\ker F_{\ast }).$

\begin{proof}
From (\ref{metric}) and (\ref{nambla}), one easily obtains%
\begin{eqnarray*}
g_{M}(\left[ X,Y\right] ,V) &=&g_{M}(\nabla _{X}Y,V)-g_{M}(\nabla _{Y}X,V) \\
&=&g_{M}(\nabla _{X}\phi Y,\phi V)-g_{M}(\nabla _{Y}\phi X,\phi V).
\end{eqnarray*}
for $X,Y\in \Gamma ((\ker F_{\ast })^{\bot })$ and $V\in \Gamma (\ker
F_{\ast }).$Then from (\ref{IREM}), we have%
\begin{eqnarray*}
g_{M}(\left[ X,Y\right] ,V) &=&g_{M}(\nabla _{X}BY,\phi V)+g_{M}(\nabla
_{X}CY,\phi V)-g_{M}(\nabla _{Y}BX,\phi V) \\
&&-g_{M}(\nabla _{Y}CX,\phi V).
\end{eqnarray*}%
Taking into account that $F$ is a Riemannian submersion and using (\ref{AT2}%
), (\ref{3}) and (\ref{IKE4}), we obtain%
\begin{eqnarray*}
g_{M}(\left[ X,Y\right] ,V) &=&g_{N}(F_{\ast }\nabla _{X}BY,F_{\ast }\phi
V)-g_{M}(CY,\phi \mathcal{A}_{X}V) \\
&&-g_{N}(F_{\ast }\nabla _{Y}BX,F_{\ast }\phi V)+g_{M}(CX,\phi \mathcal{A}%
_{Y}V).
\end{eqnarray*}%
Thus, from (\ref{5}) we have 
\begin{eqnarray*}
g_{M}(\left[ X,Y\right] ,V) &=&g_{N}(-(\nabla F_{\ast })(X,BY)+(\nabla
F_{\ast })(Y,BX),F_{\ast }\phi V) \\
&&+g_{M}(CX,\phi \mathcal{A}_{Y}V)-g_{M}(CY,\phi \mathcal{A}_{X}V)
\end{eqnarray*}%
which proves $(i)\Leftrightarrow (ii)$. On the other hand using (\ref{5}),
we get%
\begin{equation*}
(\nabla F_{\ast })(Y,BX)-(\nabla F_{\ast })(X,BY)=-F_{\ast }(\nabla
_{Y}BX-\nabla _{X}BY).
\end{equation*}%
Then (\ref{3}) implies that 
\begin{equation*}
(\nabla F_{\ast })(Y,BX)-(\nabla F_{\ast })(X,BY)=-F_{\ast }(\mathcal{A}%
_{Y}BX-\mathcal{A}_{X}BY).
\end{equation*}%
From (\ref{AT2}) it follows that $\mathcal{A}_{Y}BX-\mathcal{A}_{X}BY\in
\Gamma ((\ker F_{\ast })^{\bot }),$ this shows that $(ii)\Leftrightarrow
(iii).$
\end{proof}

\begin{remark}
\label{remczero}We assume that $(\ker F_{\ast })^{\bot }=\phi \ker F_{\ast
}\oplus \{\xi \}.$ Using (\ref{IREM}) one can prove that $CX=0$ \textit{for }%
$X\in \Gamma ((\ker F_{\ast })^{\bot })$.
\end{remark}

Hence we can give the following corollary.

\begin{corollary}
\label{inthorcor}Let $M(\phi ,\xi ,\eta ,g_{M})$ be a Kenmotsu manifold \ of
dimension $2m+1$ and $(N,g_{N})$ a Riemannian manifold of dimension $n.$ Let 
$F:M(\phi ,\xi ,\eta ,g_{M})\rightarrow $ $(N,g_{N})$ be an anti-invariant
Riemannian submersion such that $(\ker F_{\ast })^{\bot }=\phi \ker F_{\ast
}\oplus \{\xi \}.$ Then the following assertions are equivalent to each
other:

$i)$ $(\ker F_{\ast })^{\bot }$ \textit{is integrable,}

$ii)~(\nabla F_{\ast })(X,\phi Y)=(\nabla F_{\ast })(\phi X,Y),$ for $X\in
\Gamma ((\ker F_{\ast })^{\bot })$ and $X,Y\in \Gamma ((\ker F_{\ast
})^{\bot }),$

$iii)$ $\mathcal{A}_{X}\phi Y=\mathcal{A}_{Y}\phi X.$
\end{corollary}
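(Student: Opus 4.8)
The plan is to specialize the general integrability criterion of Theorem \ref{teohor} to the situation where $(\ker F_{\ast })^{\bot }=\phi \ker F_{\ast }\oplus \{\xi \}$, using Remark \ref{remhor}\,/\,Remark \ref{remczero} to kill the $C$-terms. Indeed, under the stated hypothesis Remark \ref{remczero} gives $CX=0$ for every $X\in \Gamma ((\ker F_{\ast })^{\bot })$, so $\phi X=BX\in \Gamma (\ker F_{\ast })$ for all such $X$; in particular each of the three summands $g_{M}(CY,\phi \mathcal{A}_{X}V)$, $g_{M}(CX,\phi \mathcal{A}_{Y}V)$ and the two $(\nabla F_{\ast })(\cdot ,B\cdot)$-corrections involving $C$ drop out of statements (ii) and (iii) of Theorem \ref{teohor}.

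First I would rewrite condition (ii) of Theorem \ref{teohor}: with $CX=CY=0$ and $BX=\phi X$, $BY=\phi Y$, it collapses to $g_{N}((\nabla F_{\ast })(Y,\phi X),F_{\ast }\phi V)=g_{N}((\nabla F_{\ast })(X,\phi Y),F_{\ast }\phi V)$ for all $X,Y\in \Gamma ((\ker F_{\ast })^{\bot })$ and $V\in \Gamma (\ker F_{\ast })$. Now I would argue that, because $\{F_{\ast }\phi V:V\in \Gamma (\ker F_{\ast })\}$ together with the image of $\xi$ spans $TN$ (this is exactly the decomposition of $TN$ coming from \eqref{Ba}, which in this case reads $TN=F_{\ast }(\phi \ker F_{\ast })\oplus F_{\ast }\{\xi\}$), and because $(\nabla F_{\ast })(Y,\phi X)-(\nabla F_{\ast })(X,\phi Y)=-F_{\ast }(\nabla _{Y}\phi X-\nabla _{X}\phi Y)$ is a vector with no $\xi$-component issue to worry about once one checks the $\xi$-direction separately, the vanishing of the pairing against all $F_{\ast }\phi V$ is equivalent to the full equality $(\nabla F_{\ast })(X,\phi Y)=(\nabla F_{\ast })(\phi X,Y)$. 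I would double-check the $\xi$-slot: taking $V$ related to $\xi$ is not allowed since $\xi$ is horizontal, but one still must confirm that the $\mu$-free setting forces $\nabla _{Y}\phi X-\nabla _{X}\phi Y$ to have vanishing $F_{\ast }$-image iff it is pointwise determined; this is where one uses that $F_{\ast }$ is injective on horizontal vectors and that $\nabla _{Y}\phi X-\nabla _{X}\phi Y$ lies in $(\ker F_{\ast })^{\bot }$ after applying $\mathcal{A}$ as below.

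For $(ii)\Leftrightarrow (iii)$ I would proceed exactly as in the proof of Theorem \ref{teohor}: from \eqref{5}, $(\nabla F_{\ast })(Y,\phi X)-(\nabla F_{\ast })(X,\phi Y)=-F_{\ast }(\nabla _{Y}\phi X-\nabla _{X}\phi Y)$, and since $\phi X,\phi Y\in \Gamma (\ker F_{\ast })$ are vertical, \eqref{3} gives $\nabla _{Y}\phi X=\mathcal{A}_{Y}\phi X+\mathcal{V}\nabla _{Y}\phi X$, so the horizontal part is $\mathcal{A}_{Y}\phi X-\mathcal{A}_{X}\phi Y$. Because $F_{\ast }$ is a linear isomorphism on $(\ker F_{\ast })^{\bot }$, the equality in (ii) holds for all $V$ precisely when $\mathcal{A}_{Y}\phi X-\mathcal{A}_{X}\phi Y=0$, i.e. $\mathcal{A}_{X}\phi Y=\mathcal{A}_{Y}\phi X$, which is (iii).

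The main obstacle I anticipate is the bookkeeping in the first equivalence: one must be careful that pairing against $F_{\ast }\phi V$ for all $V\in \Gamma (\ker F_{\ast })$ really detects the full vector $\nabla _{Y}\phi X-\nabla _{X}\phi Y$ rather than only its $F_{\ast }(\phi \ker F_{\ast })$-component. The resolution is that the $\xi$-component is handled automatically — using \eqref{IKE1}, $\mathcal{A}_{X}\xi =0$, so $\xi$ produces no contribution to the antisymmetrized $\mathcal{A}$-expression — so that restricting test vectors to $F_{\ast }\phi V$ loses no information. Everything else is a direct substitution $CX=0$, $BX=\phi X$ into Theorem \ref{teohor}, so the corollary follows without any genuinely new computation.
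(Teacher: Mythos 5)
Your proposal is correct and follows exactly the route the paper intends: the paper offers no explicit proof, presenting the corollary as an immediate specialization of Theorem \ref{teohor} via Remark \ref{remczero} (so $CX=0$ and $BX=\phi X$). Your extra care about why testing only against $F_{\ast }\phi V$ suffices — namely that the $\xi$-component of $\mathcal{A}_{X}\phi Y-\mathcal{A}_{Y}\phi X$ vanishes by the skew-symmetry (\ref{4c}) together with $\mathcal{A}_{X}\xi =0$ from (\ref{IKE1}) — is a detail the paper glosses over, and you resolve it correctly.
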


\begin{theorem}
\label{totgeohor}Let $M(\phi ,\xi ,\eta ,g_{M})$ be a Kenmotsu manifold\ of
dimension $2m+1$ and $(N,g_{N})$ a Riemannian manifold of dimension $n.$ Let 
$F:M(\phi ,\xi ,\eta ,g_{M})\rightarrow $ $(N,g_{N})$ be an anti-invariant
Riemannian submersion. Then the following assertions are equivalent to each
other:
\end{theorem}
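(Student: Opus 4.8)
The plan is to characterize when $(\ker F_{\ast })^{\bot }$ defines a totally geodesic foliation through the single condition $g_{M}(\nabla _{X}Y,V)=0$ for all $X,Y\in \Gamma ((\ker F_{\ast })^{\bot })$ and $V\in \Gamma (\ker F_{\ast })$, and then to rewrite this condition in two equivalent analytic forms: one phrased through the integrability tensor $\mathcal{A}$ and one phrased through the second fundamental form $(\nabla F_{\ast })$. First I would exploit the almost contact compatibility (\ref{metric}) together with the standing assumption of this subsection that $\xi $ is horizontal: since $V$ is vertical we have $\eta (V)=g_{M}(V,\xi )=0$ and $g_{M}(X,V)=0$, so (\ref{metric}) collapses to $g_{M}(\nabla _{X}Y,V)=g_{M}(\phi \nabla _{X}Y,\phi V)$.

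The central computation is to feed the Kenmotsu identity (\ref{nambla}) into $\phi \nabla _{X}Y=\nabla _{X}(\phi Y)-(\nabla _{X}\phi )Y$. The two correction terms $g_{M}(\phi X,Y)\xi $ and $\eta (Y)\phi X$ both die when paired against $\phi V$: the first because $g_{M}(\xi ,\phi V)=-g_{M}(\phi \xi ,V)=0$ by (\ref{gfi}) and $\phi \xi =0$, the second because $g_{M}(\phi X,\phi V)=g_{M}(X,V)-\eta (X)\eta (V)=0$ by (\ref{metric}). This leaves $g_{M}(\nabla _{X}Y,V)=g_{M}(\nabla _{X}(\phi Y),\phi V)$. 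Next I would split $\phi Y=BY+CY$ using (\ref{IREM}). Since $BY$ is vertical and $\phi V$ is horizontal, the decomposition (\ref{3}) isolates the horizontal part, giving $g_{M}(\nabla _{X}BY,\phi V)=g_{M}(\mathcal{A}_{X}BY,\phi V)$; the remaining term $g_{M}(\nabla _{X}CY,\phi V)$ is handled directly by (\ref{IKE4}) of Lemma \ref{horAT}, which yields $-g_{M}(CY,\phi \mathcal{A}_{X}V)$.

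Assembling these pieces produces the master identity $g_{M}(\nabla _{X}Y,V)=g_{M}(\mathcal{A}_{X}BY,\phi V)-g_{M}(CY,\phi \mathcal{A}_{X}V)$, and the vanishing of the left side for all such $X,Y,V$ is precisely the totally geodesic foliation condition; this establishes the equivalence of the geometric assertion with its $\mathcal{A}$-form. To pass to the $(\nabla F_{\ast })$-form, I would invoke the second fundamental form formula (\ref{5}) together with the observation that $BY\in \Gamma (\ker F_{\ast })$ forces $F_{\ast }(BY)=0$, so that $(\nabla F_{\ast })(X,BY)=-F_{\ast }(\nabla _{X}BY)$ and hence $g_{M}(\mathcal{A}_{X}BY,\phi V)=-g_{N}((\nabla F_{\ast })(X,BY),F_{\ast }\phi V)$, converting the intrinsic condition into the submersion-theoretic one.

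I expect the main delicacy to lie in the second paragraph, specifically in confirming that every term generated by the Kenmotsu covariant derivative (\ref{nambla}) is annihilated under pairing with $\phi V$; this is exactly where the horizontality of $\xi $ is essential, since for a vertical $\xi $ these terms would survive and the clean characterization would break down. The surrounding steps are routine applications of the O'Neill decompositions (\ref{1})--(\ref{4}) and the identities already recorded in Lemma \ref{horAT}, so no further obstacle is anticipated.
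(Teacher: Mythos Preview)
Your proposal is correct and mirrors the paper's own argument almost line by line: both derive the master identity $g_{M}(\nabla _{X}Y,V)=g_{M}(\mathcal{A}_{X}BY,\phi V)-g_{M}(CY,\phi \mathcal{A}_{X}V)$ via (\ref{metric}), (\ref{nambla}), (\ref{IREM}), (\ref{3}) and (\ref{IKE4}), and then convert to the $(\nabla F_{\ast })$-form using (\ref{5}). The only cosmetic discrepancy is that condition~(iii) is stated with $(\nabla F_{\ast })(X,\phi Y)$ rather than your $(\nabla F_{\ast })(X,BY)$; these agree because $CY\in \Gamma ((\ker F_{\ast })^{\bot })$ and (\ref{5a}) gives $(\nabla F_{\ast })(X,CY)=0$.
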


$\ i)$ $(\ker F_{\ast })^{\bot }$ \textit{defines a totally geodesic
foliation on }$M,$

$\ ii)~g_{M}(\mathcal{A}_{X}BY,\phi V)=g_{M}(CY,\phi \mathcal{A}_{X}V),$

$iii)~g_{N}((\nabla F_{\ast })(X,\phi Y),F_{\ast }\phi V)=-g_{M}(CY,\phi 
\mathcal{A}_{X}V),$

\textit{for }$X,Y\in \Gamma ((\ker F_{\ast })^{\bot })$\textit{\ and }$V\in
\Gamma (\ker F_{\ast })$.

\begin{proof}
From (\ref{metric}) and (\ref{nambla}), we obtain%
\begin{equation*}
g_{M}(\nabla _{X}Y,V)=g_{M}(\nabla _{X}\phi Y,\phi V),
\end{equation*}%
for\textit{\ }$X,Y\in \Gamma ((\ker F_{\ast })^{\bot })$\textit{\ }and%
\textit{\ }$V\in \Gamma (\ker F_{\ast }).$By virtue of (\ref{IREM}), we get%
\begin{equation*}
g_{M}(\nabla _{X}Y,V)=g_{M}(\mathcal{\nabla }_{X}BY+\mathcal{\nabla }%
_{X}CY,\phi V).
\end{equation*}%
Using (\ref{3}) and (\ref{IKE4}), we have 
\begin{equation*}
g_{M}(\nabla _{X}Y,V)=g_{M}(\mathcal{A}_{X}BY+\mathcal{V}\nabla _{X}BY,\phi
V)-g_{M}(CY,\phi \mathcal{A}_{X}V).
\end{equation*}%
The last equation shows $(i)\Leftrightarrow (ii)$.

For $X,Y\in \Gamma ((\ker F_{\ast })^{\bot })$\textit{\ }and\textit{\ }$V\in
\Gamma (\ker F_{\ast }),$%
\begin{equation}
g_{M}(\mathcal{A}_{X}BY,\phi V)=g_{M}(CY,\phi \mathcal{A}_{X}V)
\label{bilal}
\end{equation}%
Since differential $F_{\ast }$ preserves the lenghts of horizontal vectors
the relation (\ref{bilal}) forms%
\begin{equation}
g_{M}(CY,\phi \mathcal{A}_{X}V)=g_{N}(F_{\ast }\mathcal{A}_{X}BY,F_{\ast
}\phi V)  \label{K7}
\end{equation}%
By using (\ref{3}) and (\ref{5}) in (\ref{K7}), we obtain%
\begin{equation*}
g_{M}(CY,\phi \mathcal{A}_{X}V)=g_{N}(-(\nabla F_{\ast })(X,\phi Y),F_{\ast
}\phi V)
\end{equation*}%
which tells that $(ii)\Leftrightarrow (iii)$.
\end{proof}

\begin{corollary}
\label{totgeohorcor}Let $F:M(\phi ,\xi ,\eta ,g_{M})\rightarrow $ $(N,g_{N})$
be an anti-invariant Riemannian submersion such that $(\ker F_{\ast })^{\bot
}=\phi \ker F_{\ast }\oplus \{\xi \}$, where $M(\phi ,\xi ,\eta ,g_{M})$ is
a Kenmotsu manifold and $(N,g_{N})$ is a Riemannian manifold. Then the
following assertions are equivalent to each other:

$\ i)$ $(\ker F_{\ast })^{\bot }$ \textit{defines a totally geodesic
foliation on }$M,$

$ii)$ $\mathcal{A}_{X}\phi Y=0,$

$iii)$ $(\nabla F_{\ast })(X,\phi Y)=0$ \textit{for }$X,Y\in \Gamma ((\ker
F_{\ast })^{\bot })$\textit{\ and }$V\in \Gamma (\ker F_{\ast })$.
\end{corollary}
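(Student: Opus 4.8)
The plan is to derive Corollary~\ref{totgeohorcor} as a direct specialization of Theorem~\ref{totgeohor} under the extra hypothesis $(\ker F_{\ast })^{\bot }=\phi \ker F_{\ast }\oplus \{\xi \}$. The key observation, already recorded in Remark~\ref{remczero}, is that under this hypothesis one has $CX=0$ for all $X\in \Gamma ((\ker F_{\ast })^{\bot })$; equivalently, $\phi X=BX\in \Gamma (\ker F_{\ast })$ for every horizontal $X$. So the strategy is simply to substitute $C\equiv 0$ into each of the three equivalent assertions of Theorem~\ref{totgeohor} and read off what survives.

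First I would take assertion $(ii)$ of Theorem~\ref{totgeohor}, namely $g_{M}(\mathcal{A}_{X}BY,\phi V)=g_{M}(CY,\phi \mathcal{A}_{X}V)$, and note that the right-hand side vanishes since $CY=0$, leaving $g_{M}(\mathcal{A}_{X}BY,\phi V)=0$. Since $BY=\phi Y$ when $CY=0$, this reads $g_{M}(\mathcal{A}_{X}\phi Y,\phi V)=0$ for all $V\in \Gamma (\ker F_{\ast })$. Now $\mathcal{A}_{X}\phi Y$ is vertical (recall $\mathcal{A}_{X}$ reverses the horizontal and vertical distributions and $\phi Y$ is horizontal), and as $V$ ranges over $\Gamma (\ker F_{\ast })$, the vector $\phi V$ ranges over $\Gamma (\phi \ker F_{\ast })=\Gamma ((\ker F_{\ast })^{\bot })$, which is nondegenerate; but we need the pairing of the \emph{vertical} vector $\mathcal{A}_{X}\phi Y$ against all of $\ker F_{\ast }$. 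The cleaner route is to test against $U\in \Gamma (\ker F_{\ast })$ directly: writing $U=\phi W$ up to the $\xi$-direction is not quite available, so instead I would argue that $\mathcal{A}_{X}\phi Y$ vertical together with $g_{M}(\mathcal{A}_{X}\phi Y,\phi V)=0$ and the decomposition of $\ker F_{\ast}$ forces $\mathcal{A}_{X}\phi Y=0$ — more precisely, for any vertical $U$ one has $g_M(\mathcal{A}_X\phi Y,U)=-g_M(\phi Y,\mathcal{A}_X U)$ by \eqref{4c}, and $\mathcal{A}_X U$ is horizontal, hence lies in $\phi\ker F_*\oplus\{\xi\}$; the $\xi$ component contributes nothing by $g_M(\phi Y,\xi)=0$, and the $\phi\ker F_*$ part is handled by the hypothesis applied to suitable $V$. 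This gives $\mathcal{A}_{X}\phi Y=0$, which is assertion $(ii)$ of the corollary, and conversely $\mathcal{A}_{X}\phi Y=0$ trivially implies $g_{M}(\mathcal{A}_{X}BY,\phi V)=0=g_{M}(CY,\phi \mathcal{A}_{X}V)$.

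For the equivalence with $(iii)$, I would specialize assertion $(iii)$ of Theorem~\ref{totgeohor}: $g_{N}((\nabla F_{\ast })(X,\phi Y),F_{\ast }\phi V)=-g_{M}(CY,\phi \mathcal{A}_{X}V)=0$ for all $V$. Since $F_{\ast }\phi V$ ranges over $F_{\ast }(\phi \ker F_{\ast })$, which by \eqref{Ba} (with $\mu =\{\xi\}$, so $F_*\mu=0$) is all of $TN$, this says $(\nabla F_{\ast })(X,\phi Y)$ is orthogonal to everything, hence $(\nabla F_{\ast })(X,\phi Y)=0$; conversely that clearly returns assertion $(iii)$ of the theorem. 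Chaining these two specializations through the already-proved equivalences $(i)\Leftrightarrow(ii)\Leftrightarrow(iii)$ of Theorem~\ref{totgeohor} closes the argument.

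The main obstacle is the first step: passing from the ``weak'' identity $g_{M}(\mathcal{A}_{X}\phi Y,\phi V)=0$ (tested only against $\phi V$, i.e. against the $\phi\ker F_*$ part of the horizontal space, after using skew-symmetry) to the ``strong'' conclusion $\mathcal{A}_{X}\phi Y=0$ as a vertical vector. This requires using the skew-symmetry relation \eqref{4c} to convert the pairing into one against $\mathcal{A}_X V$, then invoking the precise decomposition $(\ker F_{\ast })^{\bot }=\phi \ker F_{\ast }\oplus \{\xi \}$ together with $g_M(\phi Y,\xi)=\eta(\phi Y)\cdot(\cdots)=0$ — and one must be a little careful that $\xi$ is horizontal here (the standing assumption of Subsection~3.1), so $\mathcal{T}_X\xi$ does not intrude. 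Everything else is bookkeeping with the structure equations already collected in the excerpt.
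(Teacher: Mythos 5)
Your overall strategy --- specializing Theorem \ref{totgeohor} via Remark \ref{remczero} (so $C\equiv 0$ and $\phi Y=BY$) --- is exactly what the paper intends for this corollary, but your execution of the key step contains a genuine error. Under the hypothesis $(\ker F_{\ast })^{\bot }=\phi \ker F_{\ast }\oplus \{\xi \}$, the vector $\phi Y=BY$ is \emph{vertical} for every horizontal $Y$ (you state this yourself in your first paragraph), and therefore $\mathcal{A}_{X}\phi Y=\mathcal{H}\nabla _{X}\phi Y$ is \emph{horizontal}, not vertical as you then claim. Because of this, your argument tests $\mathcal{A}_{X}\phi Y$ against vertical vectors $U$, where the vanishing $g_{M}(\mathcal{A}_{X}\phi Y,U)=0$ is automatic and proves nothing, while the real issue is orthogonality within the horizontal space: the specialized condition $g_{M}(\mathcal{A}_{X}\phi Y,\phi V)=0$ only covers the summand $\phi \ker F_{\ast }$, and one must separately check the $\xi $-direction. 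That check is $g_{M}(\mathcal{A}_{X}\phi Y,\xi )=-g_{M}(\mathcal{A}_{X}\xi ,\phi Y)=0$, using \eqref{4c} together with \eqref{IKE1} of Lemma \ref{horAT}; without it the passage from assertion $(ii)$ of the theorem to $\mathcal{A}_{X}\phi Y=0$ is not established.

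The same omission reappears in your treatment of $(iii)$: you assert that $F_{\ast }\mu =0$ because $\mu =\{\xi \}$, but $\xi $ is horizontal in this subsection and $F$ is a Riemannian submersion, so $F_{\ast }\xi \neq 0$ and $F_{\ast }(\phi \ker F_{\ast })$ is a proper, codimension-one subspace of $TN$ by \eqref{Ba}. Hence $g_{N}((\nabla F_{\ast })(X,\phi Y),F_{\ast }\phi V)=0$ for all $V$ does not by itself force $(\nabla F_{\ast })(X,\phi Y)=0$; you again need the $F_{\ast }\xi $-component, which vanishes because $(\nabla F_{\ast })(X,\phi Y)=-F_{\ast }(\mathcal{A}_{X}\phi Y)$ (since $F_{\ast }\phi Y=0$ for vertical $\phi Y$) and $g_{M}(\mathcal{A}_{X}\phi Y,\xi )=0$ as above. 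Both gaps are closed by the single fact $\mathcal{A}_{X}\xi =0$, which you never invoke; once it is inserted, the corollary does follow from Theorem \ref{totgeohor} along the route you describe.
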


The following result is a consequence from (\ref{1}) and (\ref{IKE2}).

\begin{theorem}
\label{teonottot}Let $F$ be an anti-invariant Riemannian submersion from a
Kenmotsu manifold $M(\phi ,\xi ,\eta ,g_{M})$ to a Riemannian manifold $%
(N,g_{N})$. Then $(\ker F_{\ast })$ does not define a totally geodesic
foliation on $M.$
\end{theorem}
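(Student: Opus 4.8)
The plan is to argue by contradiction, and the argument is short — it is essentially forced by combining (\ref{1}) and (\ref{IKE2}). Suppose that $\ker F_{\ast}$ defines a totally geodesic foliation on $M$. By the discussion following (\ref{4a}), this means that every fiber is a totally geodesic submanifold of $M$, i.e. the horizontal component of $\nabla_{U}W$ vanishes for all $U,W\in\Gamma(\ker F_{\ast})$; in view of (\ref{1}) this is exactly the statement $\mathcal{T}_{U}W=0$ for all vertical $U,W$. Since $\mathcal{T}_{E}=\mathcal{T}_{\mathcal{V}E}$, since $\mathcal{T}_{U}$ interchanges the horizontal and vertical distributions, and using the skew-symmetry relation (\ref{4b}) in the form $g_{M}(\mathcal{T}_{U}X,W)=-g_{M}(X,\mathcal{T}_{U}W)$ for $X$ horizontal and $W$ vertical, one concludes that in fact $\mathcal{T}$ vanishes identically; in particular $\mathcal{T}_{U}\xi=0$ for every vertical $U$.

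Next I would invoke Lemma \ref{horAT}, which supplies (\ref{IKE2}): $\mathcal{T}_{U}\xi=U$ for all $U\in\Gamma(\ker F_{\ast})$ — this is precisely where the Kenmotsu identity (\ref{xzeta}) enters, through (\ref{2}). Comparing the two facts $\mathcal{T}_{U}\xi=0$ and $\mathcal{T}_{U}\xi=U$ yields $U=0$ for every vertical vector field $U$. Since $\ker F_{\ast}$ is nontrivial (the fibers have positive dimension; indeed, whenever $(\ker F_{\ast})^{\bot}=\phi\ker F_{\ast}\oplus\{\xi\}$ one has $\dim\ker F_{\ast}=m\geq 1$ by Theorem \ref{dimhor}), this is a contradiction, and therefore $\ker F_{\ast}$ cannot define a totally geodesic foliation on $M$.

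I expect no genuine obstacle in this proof; the only point that deserves a moment's attention is that ``the fibers are totally geodesic'' is literally a condition on $\mathcal{T}_{U}W$ with both slots vertical, whereas (\ref{IKE2}) concerns $\mathcal{T}_{U}\xi$ with $\xi$ horizontal, and the O'Neill symmetry relation (\ref{4b}) is what bridges this gap. Alternatively one may bypass this and argue directly from (\ref{1}): a totally geodesic foliation gives $\nabla_{U}W=\hat{\nabla}_{U}W\in\Gamma(\ker F_{\ast})$, so differentiating $g_{M}(W,\xi)=0$ along $U$ and using (\ref{xzeta}) (which gives $\nabla_{U}\xi=U$ since $\eta(U)=g_{M}(U,\xi)=0$) produces $0=g_{M}(\nabla_{U}W,\xi)=-g_{M}(W,\nabla_{U}\xi)=-g_{M}(W,U)$; taking $W=U$ again forces $U=0$, the same contradiction.
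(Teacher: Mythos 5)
Your proof is correct and follows exactly the route the paper intends: the paper dispatches this theorem with the single remark that it ``is a consequence from (\ref{1}) and (\ref{IKE2})'', and your argument (totally geodesic fibers force $\mathcal{T}_{U}W=0$, hence via (\ref{4b}) also $\mathcal{T}_{U}\xi=0$, contradicting $\mathcal{T}_{U}\xi=U$ from Lemma \ref{horAT}) is precisely the omitted detail, with the bridge through (\ref{4b}) correctly identified as the only point needing care. Your alternative direct computation via (\ref{xzeta}) is also valid.
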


Using Theorem \ref{teonottot}, one can give the following result.

\begin{theorem}
\label{map}Let $F:M(\phi ,\xi ,\eta ,g_{M})\rightarrow $ $(N,g_{N})$ be an
anti-invariant Riemannian submersion where $M(\phi ,\xi ,\eta ,g_{M})$ is a
Kenmotsu manifold and $(N,g_{N})$ is a Riemannian manifold. Then $F$ is not
a totally geodesic map.
\end{theorem}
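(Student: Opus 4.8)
The plan is to argue by contradiction, exploiting the standard fact that a smooth map is totally geodesic precisely when its second fundamental form $(\nabla F_{\ast })$ vanishes identically, and then reducing to Theorem \ref{teonottot}.

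First I would observe that, since $F$ is a Riemannian submersion, $(\nabla F_{\ast })$ automatically vanishes on pairs of horizontal vector fields by (\ref{5a}); hence any failure of $F$ to be totally geodesic must occur on vertical (or mixed) arguments, and it suffices to find a single vertical pair on which $(\nabla F_{\ast })$ does not vanish. For $U,W\in \Gamma (\ker F_{\ast })$ I would compute $(\nabla F_{\ast })(U,W)$ directly from the definition (\ref{5}): since $F_{\ast }W=0$ the term $\nabla _{U}^{F}F_{\ast }W$ drops out, and decomposing $\nabla _{U}W=\mathcal{T}_{U}W+\hat{\nabla}_{U}W$ by (\ref{1}), with $\hat{\nabla}_{U}W$ vertical and hence killed by $F_{\ast }$, this leaves $(\nabla F_{\ast })(U,W)=-F_{\ast }(\mathcal{T}_{U}W)$.

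Now assume, for contradiction, that $F$ is totally geodesic. Then $F_{\ast }(\mathcal{T}_{U}W)=0$ for all vertical $U,W$; since $\mathcal{T}_{U}W\in \Gamma ((\ker F_{\ast })^{\bot })$ and $F_{\ast }$ restricts to a linear isometry on the horizontal distribution, this forces $\mathcal{T}_{U}W=0$ for all $U,W\in \Gamma (\ker F_{\ast })$. Equivalently $\mathcal{T}$ vanishes on the fibres, so the fibres are totally geodesic submanifolds and $\ker F_{\ast }$ defines a totally geodesic foliation on $M$, contradicting Theorem \ref{teonottot}. (Alternatively, one can bypass the injectivity of $F_{\ast }$: taking $W=U\neq 0$ and using the skew-symmetry (\ref{4b}) together with (\ref{IKE2}) gives $g_{M}(\mathcal{T}_{U}U,\xi )=-g_{M}(U,\mathcal{T}_{U}\xi )=-g_{M}(U,U)<0$, so $\mathcal{T}_{U}U\neq 0$, again in conflict with Theorem \ref{teonottot}.)

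I expect the only delicate point to be the bookkeeping in (\ref{5}) — making sure $F_{\ast }$ annihilates exactly the vertical pieces so that the surviving term is precisely $-F_{\ast }(\mathcal{T}_{U}W)$ — together with the observation that ``$\mathcal{T}_{U}W=0$ for all vertical $U,W$'' is literally the negation of the assertion established in Theorem \ref{teonottot} (that $\ker F_{\ast }$ is a totally geodesic foliation). Once the formula for $(\nabla F_{\ast })$ on vertical vectors is in hand, the statement is an immediate corollary of Theorem \ref{teonottot}, so no real geometric obstruction is anticipated.
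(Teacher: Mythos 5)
Your proposal is correct and follows essentially the same route as the paper, which derives Theorem \ref{map} directly from Theorem \ref{teonottot}: you make explicit the standard reduction $(\nabla F_{\ast })(U,W)=-F_{\ast }(\mathcal{T}_{U}W)$ for vertical $U,W$, so that total geodesicity of $F$ would force the fibres to be totally geodesic, contradicting Theorem \ref{teonottot}. Your parenthetical alternative, computing $g_{M}(\mathcal{T}_{U}U,\xi )=-g_{M}(U,U)<0$ from (\ref{4b}) and (\ref{IKE2}), is precisely the computation underlying the paper's own justification of Theorem \ref{teonottot}, so no new ideas are needed and none are missing.
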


\begin{remark}
\label{ben}Now we suppose that $\left\{ e_{1},...,e_{m}\right\} $ is a local
orthonormal frame of $\Gamma (\ker F_{\ast })$. From the well known equation 
$H$ $=\frac{1}{m}\sum\limits_{i=1}^{m}\mathcal{T}_{e_{i}}e_{i}$ ,(\ref{1})
and (\ref{4b}) we have%
\begin{eqnarray*}
mg(H,\xi ) &=&g(T_{e_{1}}e_{1},\xi )+g(T_{e_{2}}e_{2},\xi )+\cdots
+g(T_{e_{m}}e_{m},\xi ) \\
&=&-g(T_{e_{1}}\xi ,e_{1})-g(T_{e_{2}}\xi ,e_{2})-\cdots -g(T_{e_{m}}\xi
,e_{m}) \\
&=&-g(e_{1},e_{1})-g(e_{2},e_{2})-\cdots -g(e_{m},e_{m}) \\
&=&-m
\end{eqnarray*}%
We get $g(H,\xi )=-1.$ So $\ker F_{\ast }$ has not minimal fibres.
\end{remark}

By virtue of Remark \ref{ben}, we have the following theorem.

\begin{theorem}
\label{nothar}Let $F:M(\phi ,\xi ,\eta ,g_{M})\rightarrow $ $(N,g_{N})$ be
an anti-invariant Riemannian submersion where $M(\phi ,\xi ,\eta ,g_{M})$ is
a Kenmotsu manifold and $(N,g_{N})$ is a Riemannian manifold. Then $F$ is
not harmonic.
\end{theorem}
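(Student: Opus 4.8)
The plan is to use the criterion that a Riemannian submersion $F$ is harmonic if and only if it has minimal fibres, i.e. if and only if the mean curvature vector field $H$ of the fibres vanishes identically. This characterization is standard (see \cite{B}) and follows from the decomposition of the tension field $\tau(F)$: since $(\nabla F_{\ast})(X,Y)=0$ for horizontal $X,Y$ by \eqref{5a}, the only surviving contribution to $\tau(F)=\sum_{i=1}^{m}(\nabla F_{\ast})(e_i,e_i)$ over a vertical orthonormal frame is the term built from $\mathcal{T}_{e_i}e_i$, which is precisely $-m\,F_{\ast}H$ up to the obvious sign. So harmonicity is equivalent to $H=0$.

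Next I would invoke Remark \ref{ben}, which has already done the essential computation: with $\{e_1,\dots,e_m\}$ a local orthonormal frame of $\Gamma(\ker F_{\ast})$, one gets $mg(H,\xi)=\sum_i g(\mathcal{T}_{e_i}e_i,\xi)=-\sum_i g(\mathcal{T}_{e_i}\xi,e_i)=-\sum_i g(e_i,e_i)=-m$, using skew-symmetry of $\mathcal{T}$ from \eqref{4b} and the identity $\mathcal{T}_U\xi=U$ from \eqref{IKE2}. Hence $g(H,\xi)=-1\neq 0$, so $H\neq 0$; the fibres are not minimal. Combining this with the harmonicity criterion of the previous paragraph immediately yields that $F$ is not harmonic, which is the assertion of Theorem \ref{nothar}.

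I should note that Remark \ref{ben} and \eqref{IKE2} are stated in the subsection where $\xi$ is horizontal; however, the other case — $\xi$ vertical — has already been ruled out entirely (the introduction and abstract state that there exist no such submersions from Kenmotsu manifolds with $\xi$ vertical), so the horizontal case is the only one in which the statement has content, and there is no gap. The proof is therefore essentially a two-line deduction: quote the minimal-fibre characterization of harmonic Riemannian submersions, then quote Remark \ref{ben} to see $H\neq 0$.

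The main (and only mild) obstacle is making the first step precise: one must be careful that the tension field of a Riemannian submersion reduces, via \eqref{5a} and \eqref{6}, to a vertical-frame sum whose value is $-m\,F_{\ast}H$ (equivalently $-(m-n)H$ pushed forward, with the trace taken over a full orthonormal frame of $M$ and the horizontal part contributing zero). Once that reduction is acknowledged — it is routine and appears in \cite{B} — the conclusion is forced by $g(H,\xi)=-1$. No further computation is needed.
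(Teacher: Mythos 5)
Your proposal is correct and follows essentially the same route as the paper: the paper's entire justification for Theorem \ref{nothar} is the phrase ``By virtue of Remark \ref{ben}'', i.e.\ the computation $g(H,\xi)=-1$ via $\mathcal{T}_{U}\xi=U$ and the skew-symmetry (\ref{4b}), combined with the standard fact that a Riemannian submersion is harmonic if and only if its fibres are minimal. You have merely made explicit the reduction of the tension field to the vertical trace, which the paper leaves implicit, so there is nothing to correct.
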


\subsection{\textbf{Anti-invariant Riemannian submersions admitting vertical
structure vector field }}

In this subsection, we will prove that there do not exist (anti-invariant)
Riemannian submersions from Kenmotsu manifolds onto Riemannian manifolds
such that characteristic vector field $\xi $ is a vertical vector field.
Moreover, we will give a method to get horizontally conformal submersion
examples from warped product manifolds onto Riemannian manifolds.

It is easy to see that $\mu $ is an invariant distribution of $(\ker F_{\ast
})^{\bot },$ under the endomorphism $\phi $. Thus, for $X\in \Gamma ((\ker
F_{\ast })^{\bot }),$ we have%
\begin{equation}
\phi X=BX+CX,  \label{A2}
\end{equation}%
where $BX\in \Gamma (\ker F_{\ast })$ and $CX\in \Gamma (\mu ).$ On the
other hand, since $F_{\ast }((\ker F_{\ast })^{\bot })=TN$ and $F$ is a
Riemannian submersion, using (\ref{A2}) we derive $g_{N}(F_{\ast }\phi
V,F_{\ast }CX)=0,$ for every $X\in $ $\Gamma ((\ker F_{\ast }))^{\perp \text{
}}$and $V\in \Gamma (\ker F_{\ast })$, which implies that 
\begin{equation}
TN=F_{\ast }(\phi (\ker F_{\ast }))\oplus F_{\ast }(\mu ).  \label{A2a}
\end{equation}

\begin{theorem}
\label{WPC}Let $(M^{m+1}=I\times _{f}L^{m},g_{M}=dt^{2}+f^{2}g_{L})$ be a
warped product manifold of an interval $I$ and a Riemannian manifold $L$. If 
$F:(M^{m+1},g_{M})\rightarrow $ $(N^{n},g_{N})$ is a Riemannian submersion
with vertical vector field $\frac{\partial }{\partial t}=\partial _{t}$ then
warped product manifold is a Riemannian product manifold.
\end{theorem}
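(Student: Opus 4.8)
The plan is to prove that the warping function $f$ must be constant; once that is established, $g_M=dt^{2}+f^{2}g_L$ is exactly the metric of the Riemannian product $(I,dt^{2})\times(L,f^{2}g_L)$, which is the assertion. Equivalently, one may then invoke \cite{PON}: the canonical foliations $D_I$ and $D_L$ of $I\times L$ are mutually orthogonal, $D_I$ is totally geodesic by Proposition \ref{ONEILL}(i), and for constant $f$ Proposition \ref{ONEILL}(iii) gives $\mathrm{nor}\,\nabla_{X_2}Y_2=-(g_M(X_2,Y_2)/f)\,\mathrm{grad}\,f=0$, so $D_L$ is totally geodesic as well, whence $g_M$ is a product metric. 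The two ingredients for the key reduction are the warped-product covariant derivative of $\partial_t$ and the antisymmetry of the O'Neill tensor $\mathcal A$.

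First I would record how $\partial_t$ behaves under $\nabla$. Since $M_1=I$ carries the flat metric $dt^{2}$, Proposition \ref{ONEILL}(i) gives $\nabla_{\partial_t}\partial_t=0$, while Proposition \ref{ONEILL}(ii) gives $\nabla_{Y}\partial_t=(f'/f)\,Y$ for every $Y\in\mathcal L(L)$, with $f'=df/dt$. Because $\nabla_{(\cdot)}\partial_t$ is tensorial in its lower slot, expanding a horizontal field $X$ in a local coordinate frame of $L$ (legitimate since $\partial_t$ vertical forces $g_M(X,\partial_t)=0$, so $X$ is a section of the $L$-tangent subbundle) yields $\nabla_X\partial_t=(f'/f)\,X$ for every $X\in\Gamma((\ker F_{\ast})^{\bot})$.

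Now fix $X\in\Gamma((\ker F_{\ast})^{\bot})$. By Lemma \ref{t}(ii) one has $\mathcal A_XX=-\mathcal A_XX$, hence $\mathcal A_XX=0$, while (\ref{AT2}) gives $\mathcal A_XX=\mathcal V\nabla_XX$; therefore $\nabla_XX$ is horizontal and $g_M(\nabla_XX,\partial_t)=0$. On the other hand, differentiating $g_M(X,\partial_t)=0$ in the direction $X$ and using the formula above, $g_M(\nabla_XX,\partial_t)=-g_M(X,\nabla_X\partial_t)=-(f'/f)\,g_M(X,X)$. Thus $(f'/f)\,g_M(X,X)=0$ for every horizontal $X$; since $(\ker F_{\ast})^{\bot}$ has positive rank $n$ (we may assume $n=\dim N\ge 1$, since $n=0$ is degenerate), picking at each point a nonzero horizontal $X$ gives $g_M(X,X)>0$ and forces $f'\equiv 0$. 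Hence $f$ is constant and the proof is complete.

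I expect no serious obstacle here: the only step that deserves a word of justification is applying Proposition \ref{ONEILL}(ii) — stated for lifts of vector fields — to a possibly non-projectable horizontal $X$, which is dealt with by the local-frame expansion mentioned above. Conceptually the content is simply that a vertical $\partial_t$ is a geodesic field and that $\mathcal A_XX=0$ for horizontal $X$, and these are incompatible with a non-constant warping. As an independent sanity check I would also note that $F_{\ast}\partial_t=0$ makes $F$ factor as $F=\bar F\circ\pi_2$ through the projection $\pi_2:I\times_{f}L\to L$ (using that $I$ is connected), after which the Riemannian submersion identity $g_N(\bar F_{\ast}X,\bar F_{\ast}X)=f(t)^{2}g_L(X,X)$ for a fixed horizontal $X$ again forces $f$ constant, because its left-hand side is independent of $t$.
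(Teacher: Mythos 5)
Your proof is correct and follows essentially the same route as the paper: both reduce the statement to showing $f'=0$ by combining the warped-product identity $\nabla_X\partial_t=(f'/f)X$ for horizontal $X$ with the alternating character of the O'Neill tensor $\mathcal A$ on the horizontal distribution. The only cosmetic difference is that you evaluate on the diagonal ($\mathcal A_XX=0$ gives $(f'/f)g_M(X,X)=0$ directly), whereas the paper observes that $g_M(\mathcal A_XY,\partial_t)=-(f'/f)g_M(X,Y)$ is simultaneously symmetric and antisymmetric in $X,Y$; these are the same computation.
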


\begin{proof}
Let $\sigma =(t,x_{1},x_{2},...,x_{m})$ be a coordinate system for $M$ at $%
p\in M$ and $y_{1},y_{2},...,y_{n}$ be a coordinate system for $N$ at $F(p$%
). Since $\partial _{t}$ is a vertical vector field, we have%
\begin{equation*}
0=F_{\ast }(\partial _{t})_{p}=\sum\limits_{i=1}^{n}\frac{\partial
(y_{i}\circ F)}{\partial _{t}}(p)\frac{\partial }{\partial y_{i}}\mid
_{F(p)}.
\end{equation*}%
So\ the component functions\ $y_{i}\circ F=f_{i}$ of $F$ do not contain $t$
parameter. Namely, 
\begin{equation*}
F:I\times _{f}L\rightarrow N,(t,x)\rightarrow F(t,x)=(f_{1}(x),...,f_{n}(x)),
\end{equation*}%
where $x=(x_{1},x_{2},...,x_{m})$ and also $(\ker F_{\ast })^{\bot }\mid
_{(t,x)}\subseteq T_{(t,x)}(\{t\}\times L)\cong T_{x}L$ at point $p=(t,x)\in
M$. That is, if $\tilde{X}\in $ $(\ker F_{\ast })^{\bot }$, there is a
vector field $X$ $\in \Gamma (TN)$ such that the lift of $X$ to $I\times L$
is the vector field $\tilde{X}$, $\pi _{2\ast }(\tilde{X}_{p})=X_{\pi
_{2}(p)}$ for all $p\in M$. For the sake of \ the simplify we use the same
notation for a vector field and for its lift.

Using Proposition \ref{ONEILL} (ii), we obtain 
\begin{equation}
\nabla _{X}\partial _{t}=\frac{f^{\prime }}{f}X  \label{RIE1}
\end{equation}%
for $X\in \Gamma ((\ker F_{\ast })^{\bot })$. From (\ref{3}) and (\ref{RIE1}%
) we have%
\begin{equation}
\mathcal{A}_{X}\partial _{t}=\frac{f^{\prime }}{f}X  \label{RIE2}
\end{equation}%
for $X\in \Gamma ((\ker F_{\ast })^{\bot })$.

By applying (\ref{TUW2}), (\ref{4c}) and (\ref{RIE2}), we find%
\begin{equation*}
g_{M}(\mathcal{A}_{X}Y,\partial _{t})=-\frac{f^{\prime }}{f}g_{M}(X,Y)=-%
\frac{f^{\prime }}{f}g_{M}(Y,X)=g_{M}(\mathcal{A}_{Y}X,\partial _{t})=-g_{M}(%
\mathcal{A}_{X}Y,\partial _{t})
\end{equation*}%
for \textit{\ }$X,Y\in \Gamma ((\ker F_{\ast })^{\bot })$. Thus, we obtain%
\begin{equation}
g_{M}(\mathcal{A}_{X}Y,\partial _{t})=-\frac{f^{\prime }}{f}g_{M}(X,Y)=0.
\label{RIE3}
\end{equation}%
It follows from (\ref{RIE3}) that  $f^{\prime }=0$ . Hence warping function $%
f$ must be constant. Therefore, up to a change of scale, $M$ is a Riemannian
product manifold.
\end{proof}

\begin{theorem}
\label{WPK}Let $M(\phi ,\xi ,\eta ,g_{M})$ be a Kenmotsu manifold \ of
dimension $2m+1$ and $(N,g_{N})$ is a Riemannian manifold of dimension $n$.
There does not exist a\ Riemannian submersion $F:M(\phi ,\xi ,\eta
,g_{M})\rightarrow $ $(N,g_{N})$ such that characteristic vector field $\xi $
is a vertical vector field.

\begin{proof}
From \cite{kenmotsu} we know that locally a Kenmotsu manifold is a warped
product $I\times _{f}L$ of an interval $I$ and a Kaehler manifold $L$ with
metric $g_{M}=dt^{2}+f^{2}g_{L}$ and warping function $f(t)=se^{t}$, where $s
$ is a positive constant. Let $\xi =\frac{\partial }{\partial t}=\partial
_{t}$ be a vertical vector field. It follows from Theorem \ref{WPC} , $M$ is
a Riemannian product manifold. Since $f(t)=se^{t}$ is not constant , $M$ can
not be a Riemannian product manifold. \ This is a contradiction which
completes the proof of theorem.
\end{proof}
\end{theorem}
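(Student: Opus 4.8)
The plan is to reduce Theorem \ref{WPK} to Theorem \ref{WPC} by exploiting the local warped-product structure of Kenmotsu manifolds due to Kenmotsu, which is already recalled in Section 2 as fact $(a)$. First I would invoke that result: locally, the Kenmotsu manifold $M(\phi,\xi,\eta,g_M)$ is isometric to a warped product $I\times_f L$, where $I$ is an open interval, $L$ is a Kaehler manifold (in particular a Riemannian manifold of dimension $2m$), the metric is $g_M=dt^2+f^2 g_L$, and the warping function is $f(t)=se^t$ for some nonzero constant $s$. Moreover, under this identification the characteristic vector field $\xi$ corresponds to $\partial_t=\frac{\partial}{\partial t}$.

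Next I would argue by contradiction. Suppose a Riemannian submersion $F:M(\phi,\xi,\eta,g_M)\rightarrow(N,g_N)$ exists with $\xi$ vertical. Passing to a sufficiently small neighbourhood (submersions restrict to submersions on open sets, and verticality is a local/pointwise condition), we may assume $M=I\times_f L$ with $g_M=dt^2+f^2g_L$ and $\xi=\partial_t$, and $F$ restricted to this neighbourhood is still a Riemannian submersion with the vertical vector field $\partial_t$. This is precisely the hypothesis of Theorem \ref{WPC} (with $m$ there replaced by $2m$), so that theorem applies and tells us that the warped product must in fact be a Riemannian product, i.e. the warping function $f$ must be constant. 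But $f(t)=se^t$ with $s\neq 0$ is manifestly non-constant ($f'(t)=se^t\neq 0$), which is the desired contradiction. Hence no such submersion exists.

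The only real subtlety, and the step I would be most careful about, is the localization: one must make sure that the conclusion ``$M$ is globally a warped product'' is not being assumed — only the local statement from \cite{kenmotsu} is available — and that restricting $F$ to the warped-product neighbourhood is legitimate. Since being a Riemannian submersion is preserved under restriction to open subsets of the domain (one simply restricts the map and the metrics), and the identity $\xi=\partial_t$ holds on that neighbourhood, Theorem \ref{WPC} does apply verbatim on the neighbourhood; the contradiction obtained there already suffices to rule out the global object. No genuine computation is needed beyond observing $\frac{d}{dt}(se^t)=se^t\neq 0$; the content is entirely in chaining fact $(a)$ of \cite{kenmotsu} with Theorem \ref{WPC}.
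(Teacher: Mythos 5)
Your proposal is correct and follows essentially the same route as the paper: invoke Kenmotsu's local warped-product decomposition $I\times_f L$ with $\xi=\partial_t$, apply Theorem \ref{WPC} to force the warping function to be constant, and contradict $f(t)=se^t$. Your extra care about localizing the submersion to the warped-product neighbourhood is a point the paper's proof passes over silently, but it does not change the argument.
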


\begin{theorem}
\label{Warped} Let $M=M_{1}\times _{f}M_{2}$ be a warped product manifold
with metric $g=g_{1}+f^{2}g_{2}$ , $\pi _{2}:M_{1}\times M_{2}\rightarrow
M_{2}$ second canonical projection and $(M_{3},g_{3})$ Riemannian manifold.
If $f_{1}$ is a Riemannian submerison from $M_{2}$ onto $M_{3}$ then $%
f_{2}=f_{1}\circ \pi _{2}:$ $M\rightarrow M_{3}$ is a horizontally conformal
submersion.
\end{theorem}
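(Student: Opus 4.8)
The plan is to show that the composition $f_2 = f_1 \circ \pi_2$ is a horizontally conformal submersion by computing directly how $f_{2\ast}$ acts on the horizontal distribution of $f_2$ and checking the conformality condition (\ref{CONFORMAL}). First I would identify the relevant distributions: by Proposition \ref{ONEILL}, the vertical and horizontal spaces of the second projection $\pi_2$ at a point $(p,q)$ are identified with $T_pM_1$ and $T_qM_2$ respectively, and since $f_1$ is itself a Riemannian submersion from $M_2$ onto $M_3$, the fibres of $f_2$ over a point of $M_3$ are the preimages under $\pi_2$ of the fibres of $f_1$; hence $\ker f_{2\ast}$ at $(p,q)$ is $T_pM_1 \oplus \ker(f_{1\ast})_q$, and $(\ker f_{2\ast})^{\bot}$ is identified (via $\pi_{2\ast}$) with $(\ker f_{1\ast})^{\bot}_q \subseteq T_qM_2$. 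In particular $f_2$ has maximal rank because $f_1$ does and $\pi_2$ is a submersion (this is exactly Proposition \ref{Clark}), so $f_2$ is a smooth submersion.

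Next I would verify the conformality estimate on horizontal vectors. Take $X, Y \in \Gamma((\ker f_{2\ast})^{\bot})$; then $X, Y$ are (lifts of) horizontal vector fields on $M_2$ for the submersion $f_1$, and $\pi_{2\ast}X, \pi_{2\ast}Y$ lie in $(\ker f_{1\ast})^{\bot}$. On one hand, by the warped product metric formula, $g(X,Y) = g_1(\pi_{1\ast}X, \pi_{1\ast}Y) + f^2 g_2(\pi_{2\ast}X, \pi_{2\ast}Y) = f^2 g_2(\pi_{2\ast}X, \pi_{2\ast}Y)$, since $X,Y$ being $\pi_2$-horizontal means $\pi_{1\ast}X = \pi_{1\ast}Y = 0$. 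On the other hand, because $f_1$ is a Riemannian submersion, $g_2(\pi_{2\ast}X, \pi_{2\ast}Y) = g_3(f_{1\ast}\pi_{2\ast}X, f_{1\ast}\pi_{2\ast}Y) = g_3(f_{2\ast}X, f_{2\ast}Y)$, using $f_{2\ast} = f_{1\ast}\circ\pi_{2\ast}$. Combining, $g(X,Y) = f^2 \, g_3(f_{2\ast}X, f_{2\ast}Y)$, which is precisely (\ref{CONFORMAL}) with dilation $e^{2\lambda} = f^2$, i.e. $\lambda = \ln f$ pulled back to $M$ via $\pi_1$; note $f$ is a positive function on $M_1$, so $\lambda$ is a well-defined smooth function on $M$, and it is nonconstant exactly when $f$ is.

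The main obstacle—really the only subtle point—is the bookkeeping that identifies $(\ker f_{2\ast})^{\bot}$ correctly and justifies the step $\pi_{1\ast}X = 0$ for horizontal $X$. One must check that the horizontal distribution of $f_2$ with respect to the warped metric $g$ sits inside the $\pi_2$-horizontal distribution $\{0\}\oplus T_qM_2$; this follows because the $\pi_1$-direction $T_pM_1\oplus\{0\}$ is contained in $\ker f_{2\ast}$ (as the component functions of $f_2$ do not depend on the $M_1$ variable, exactly as in the proof of Theorem \ref{WPC}), and $\pi_2$-horizontal vectors are $g$-orthogonal to $\pi_2$-vertical ones by Proposition \ref{ONEILL}(ii)—or more simply because the warped metric is block-diagonal with respect to the splitting $TM_1\oplus TM_2$. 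Once this orthogonality is in place, the computation above goes through verbatim, and since every horizontal vector field of $f_2$ is of the stated form, the conformal relation holds on all of $(\ker f_{2\ast})^{\bot}$, completing the proof.
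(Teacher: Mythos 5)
Your proposal is correct and follows essentially the same route as the paper's proof: establish the submersion property via Proposition \ref{Clark}, identify $\ker f_{2\ast}$ with $T_pM_1\times\ker f_{1\ast q}$ and $(\ker f_{2\ast})^{\bot}$ with $(\ker f_{1\ast})^{\bot}_q$, and then run the same three-step chain of equalities $g(X,Y)=f^{2}g_{2}(\pi_{2\ast}X,\pi_{2\ast}Y)=f^{2}g_{3}(f_{1\ast}\pi_{2\ast}X,f_{1\ast}\pi_{2\ast}Y)=f^{2}g_{3}(f_{2\ast}X,f_{2\ast}Y)$. Your extra remarks justifying why the $f_2$-horizontal distribution sits inside the $\pi_2$-horizontal one (block-diagonality of the warped metric) simply make explicit a step the paper asserts without comment.
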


\begin{proof}
Since $f_{1}$ is a Riemannian submersion, rank $f_{1}=\dim M_{3}$. Using
Proposition \ref{Clark}, we have \ rank $f_{2\mid _{(p,q)}}=$ rank $%
f_{1}\mid _{f_{1(q)}}=\dim M_{3}$ for any point $(p,q)$ $\in $ $M.$
Consequently $f_{2}$ is a submersion. Since $\pi _{2}$ is a natural
horizontally conformal submersion for a warped product manifold, we get $%
\ker \pi _{2\ast \mid _{(p,q)}}=T_{(p,q)}M_{1}\equiv T_{(p,q)}(M_{1}\times
\{q\})\cong T_{p}M_{1}.$ So $\ker f_{2\ast \mid _{(p,q)}}\cong
T_{p}M_{1}\times \ker f_{1\ast q}$ and ($\ker f_{2\ast })_{\mid
_{(p,q)}}^{\perp }=\{p\}\times (\ker f_{1\ast })_{\mid q}^{\perp }\cong
(\ker f_{1\ast })_{\mid q}^{\perp }.$ Hence,%
\begin{eqnarray*}
g(X,Y) &=&f^{2}(p)g_{2}(\pi _{2\ast }(X),\pi _{2\ast }(Y)) \\
&=&f^{2}(p)g_{3}(f_{1\ast }(\pi _{2\ast }(X)),f_{1\ast }(\pi _{2\ast }(Y)) \\
&=&f^{2}(p)g_{3}(f_{2\ast }(X),f_{2\ast }(Y))
\end{eqnarray*}%
for $X,Y\in \Gamma ((\ker f_{2\ast })^{\bot }).$ So we get the requested
result.
\end{proof}

\begin{remark}
Theorem \ref{Warped} gives a chance to produce horizontally conformal
submersion examples.
\end{remark}

\section{Examples}

We now give some examples for anti-invariant submersion and anti-invariant
horizontally conformal submersions from Kenmotsu manifolds.\ \ \ 

\begin{example}
\label{ex2}Let $M$ be a Kenmotsu manifold as in Example \ref{KEN}$.$ Let $N$
be $%
\mathbb{R}
\times _{e^{z}}%
\mathbb{R}
^{2}$ The Riemannian metric tensor field $g_{N\text{ }}$is defined by$%
~g_{N}=e^{2z}(du\otimes du+dv\otimes dv)+dz\otimes dz$ on $N$.

Let $F:M\rightarrow N$ be a map defined by $F(x_{1},x_{2},y_{1},y_{2},t)=(%
\frac{x_{1}+y_{2}}{\sqrt{2}},\frac{x_{2}+y_{1}}{\sqrt{2}},z)$. Then, a
simple calculation gives%
\begin{equation*}
\ker F_{\ast }=span\{V_{1}=\frac{1}{\sqrt{2}}(E_{2}-E_{3}),\text{ }V_{2}=%
\frac{1}{\sqrt{2}}(E_{1}-E_{4})\}
\end{equation*}%
and%
\begin{equation*}
(\ker F_{\ast })^{\bot }=span\{H_{1}=\frac{1}{\sqrt{2}}(E_{1}+E_{4}),\text{ }%
H_{2}=\frac{1}{\sqrt{2}}(E_{2}+E_{3}),\text{ }H_{3}=E_{5}=\xi \}.
\end{equation*}%
Then it is easy to see that $F$ is a Riemannian submersion. Moreover, $\phi
V_{1}=-H_{1}$, $\phi V_{2}=-H_{2}$ imply that $\phi (\ker F_{\ast })\subset
(\ker F_{\ast })^{\bot }=$ $\phi (\ker F_{\ast })\oplus \{\xi \}$. Thus $F$
is an anti-invariant Riemannian submersion such that $\xi $ is a horizontal
vector field.
\end{example}

\begin{example}
\label{ex1}Let $M$ \ be a Kenmotsu manifold as in Example \ref{KEN} and $N$
be $%
\mathbb{R}
^{2}.$ The Riemannian metric tensor field $g_{N\text{ }}$is defined by $%
g_{N}=e^{2z}(du\otimes du+dv\otimes dv)$ on $N$.

Let $F:M\rightarrow N$ be a map defined by $F(x_{1},x_{2},y_{1},y_{2},z)=(%
\frac{x_{1}+y_{2}}{\sqrt{2}},\frac{x_{2}+y_{1}}{\sqrt{2}})$. Then, by direct
calculations we have 
\begin{equation*}
\ker F_{\ast }=span\{V_{1}=\frac{1}{\sqrt{2}}(E_{3}-E_{2}),\text{ }V_{2}=%
\frac{1}{\sqrt{2}}(E_{4}-E_{1}),\text{ }V_{3}=E_{5}=\xi =\frac{\partial }{%
\partial z}\}
\end{equation*}%
and 
\begin{equation*}
(\ker F_{\ast })^{\bot }=span\{H_{1}=\frac{1}{\sqrt{2}}(E_{3}+E_{2}),\text{ }%
H_{2}=\frac{1}{\sqrt{2}}(E_{4}+E_{1})\}.
\end{equation*}%
Then it is easy to see that $F$ is a horizontally conformal submersion.
Moreover, $\phi V_{1}=H_{2},$ $\phi V_{2}=H_{1},$ $\phi V_{3}=0$ imply that $%
\phi (\ker F_{\ast })=(\ker F_{\ast })^{\bot }.$ As a result, $F$ is an
anti-invariant horizontally conformal submersion such that $\xi $ is a
vertical vector field.
\end{example}

\begin{remark}
Recently Akyol M.A. and \c{S}ahin B. \cite{AS} studied conformal
anti-invariant submersions from almost Hermitian manifolds onto Riemannian
manifolds. So it will be worth the study area which is anti-invariant
(horizontally) conformal submersion from almost contact metric manifolds
onto Riemannian manifolds.
\end{remark}

\textbf{Acknowledgement}

The authors are grateful to the referee for his/her valuable comments and
suggestions.

\end{document}